\newcommand{\be}{\begin{eqnarray}}
\newcommand{\ee}{\end{eqnarray}}
\newcommand{\beq}{\begin{equation}}
\newcommand{\eeq}{\end{equation}}
\newcommand{\ben}{\begin{eqnarray*}}
\newcommand{\een}{\end{eqnarray*}}
\newtheorem{theorem}{Theorem}
\newtheorem{lemma}[theorem]{Lemma}
\newtheorem*{maintheorem}{Theorem}
\newtheorem*{mainlemma}{Key Lemma}
\newtheorem{proposition}[theorem]{Proposition}
\newtheorem{remark}[theorem]{Remark}
\global\let\AddToReset=\@addtoreset
\newcommand{\RR}{\mathbb R}
\newcommand{\inte}{\int_{\mathbb R^N}}
\begin{document}
\title[Uniqueness of the limit]{On the uniqueness of the limit for an asymptotically  autonomous semilinear  equation on $\mathbb R^N$}
      {   }\thanks{This research was supported by
        FONDECYT-1110074 for the first author, 
        FONDECYT-1110268 for the second autor and FONDECYT-11121125 for the  third author.}
\author{Carmen Cort\'azar}
\address{Departamento de Matem\'atica, Pontificia
        Universidad Cat\'olica de Chile,
        Casilla 306, Correo 22,
        Santiago, Chile.}
\email{\tt ccortaza@mat.puc.cl}
\author{Marta Garc\'{\i}a-Huidobro}
\address{Departamento de Matem\'atica, Pontificia
        Universidad Cat\'olica de Chile,
        Casilla 306, Correo 22,
        Santiago, Chile.}
\email{\tt mgarcia@mat.puc.cl}
\author{Pilar Herreros }
\address{Departamento de Matem\'atica, Pontificia
        Universidad Cat\'olica de Chile,
        Casilla 306, Correo 22,
        Santiago, Chile.}
\email{\tt pherrero@mat.puc.cl}


\begin{abstract}
We consider a parabolic equation of the form
\begin{equation*}
\begin{gathered}
u_t=\Delta u +f(u)+h(x,t),\quad (x,t)\in\mathbb R^N\times (0,\infty)\\
u(x,t)\ge 0\quad\mbox{for all }(x,t)\in\mathbb R^N\times (0,\infty).
\end{gathered}
\end{equation*}
where $f\in C^1(\mathbb R)$ is such that $f(0)=0$ and $f'(0)<0$ and $h$ is a suitable function on $\mathbb R^N\times (0,\infty)$. We show that under certain conditions, each globally defined and nonnegative bounded solution $u$ converges to a single steady state.
\end{abstract}

\maketitle

\section{Introduction and main results}
We consider non-negative, bounded and globally defined solutions of the parabolic problem
\begin{equation}\label{eq1}
\begin{gathered}
u_t=\Delta u +f(u)+h(x,t),\quad (x,t)\in\mathbb R^N\times (0,\infty)\\
u(x,0)=u_0(x)
\end{gathered}
\end{equation}
with $u_0\in H^1(\RR^N)$,
 $f\in C^1(\mathbb R)$ and $h$  a suitable function on $\mathbb R^N\times (0,\infty)$. By a bounded globally defined solution of \eqref{eq1} we mean a bounded function $u\in C([0,\infty),H^1(\RR^N))$ such that
\ben
\int_0^\infty\inte (\nabla u\cdot \nabla \varphi-f(u)\varphi-h\varphi-u\varphi_t)dx\ dt=\inte u_0(x)\varphi(x,0)dx
\een
for all $\varphi\in C^1_c([0,\infty),H^1(\RR^N))$.

 This problem was considered first in the autonomous case by Feireisl and Petzeltov\'a in \cite{fp}
for
$$f(s)=-s-\sum_{j=1}^mb_js^{r_j}+\sum_{j=1}^na_js^{p_j},$$
 with $a_j,\ b_j>0$ and $1<r_j<p_i\le N/(N-2)$, and later by Cort\'azar, del Pino and Elgueta
in \cite{cdpe}
for the particular nonlinearity
$$f(s)=s^p-s\quad 1<p<\frac{N+2}{N-2}\quad\mbox{
if $N\ge 3$, $1<p<\infty$ if $N=2$,}$$ and for compactly supported initial datum $u_0\in C(\RR^N)$.

The key common property of this type of nonlinearities, which leads to the convergence result, is that the stationary problem
\beq\label{stat0}
\begin{gathered}
\Delta w+f(w)=0,\quad \mbox{ in }\RR^N\\
 w(x)\to0\mbox{ as }|x|\to\infty
 \end{gathered}
\eeq
has, up to translations, a unique positive solution $w$ which is  symmetric with respect to some $x_0$, see for example \cite{gnn,ps1, sz}. Moreover, $w$ is  nondegenerate in the sense that the  linearized operator $\Delta+f'(w)$  on $L^2(\RR^N)$, restricted to $L^2_r(\RR^N)$ has a bounded inverse, or, in other words, its kernel is $N$ dimensional.
\bigskip

Later, Busca, Jendoubi, and  Pol\'a\v{c}ik in \cite{bjp}
extended these results for a  solution $u$ of \eqref{eq1} satisfying
 $$\leqno{(UD)}\qquad \quad
\lim\limits_{|x|\to\infty}\sup\limits_{t\in(0,\infty)}u(x,t)=0,$$
with $f\in C^1(\mathbb R)$, $f'(0)\not=0$,  and exponentially decreasing initial datum $u_0\in C(\RR^N)$.

They introduce the {\em first moments of energy}, which are constant in the $\omega$-limit set of $u$ as the usual energy is, allowing them to discriminate among the different translates of a stationary solution. They eliminate the possibility of a continuum of solutions by verifying that any $w\in\omega(u)$ has the property of {\em normal hiperbolicity}, meaning that the dimension of the kernel of  $\Delta+f'(w)$, (which by the symmetry of $w$ is  $N$ or   $N+1$), is the same as the dimension of the manifold formed by the set of all steady states in some neighborhood of $w$.

For the non-autonomous case we refer to a recent work of F\"oldes and Pol\'a\v{c}ik in \cite{pol1}
and the references therein, see for example \cite{chill1, chill2, chill3}. They assume $f\in C^1(\mathbb R)$, $f'(0)\not=0$, and under the minimal assumption that $h\in L^\infty(\RR^N\times(0,\infty))$ satisfies
$$\lim_{t\to\infty}||h(\cdot,t)||_\infty=0,$$
they prove a quasiconvergence result, \cite[Theorem 2.2]{pol1}, which essentially states that if $u$ is a non-negative, bounded and globally defined
solution of \eqref{eq1} satisfying $(UD)$, then either the $\omega$-limit set  $\omega(u)=\{0\}$ or it consists of ground states of \eqref{stat0}.

However, in order to prove their convergence result (\cite[Theorem 2.1]{pol1}), they impose a strong restriction on the decay of $h$, namely, they assume that
 there exist $\alpha\in(0,1)$, $\mu>0$ and $C^*>0$   such that $\tilde h(x,t):=e^{\mu t}h(x,t)$ satisfies
\begin{equation*}
(H)\quad
\begin{cases}
\mbox{either }||\tilde h||_{C^\alpha((0,\infty), L^\infty(\RR^N))}\le C^*,\\
\mbox{or }\quad ||\tilde h||_{L^\infty((0,\infty), C^\alpha(\RR^N))}\le C^*.
\end{cases}
\end{equation*}

This condition of  exponential decay on $h$ allows them to prove that all elements in $\omega(u)$ are symmetric with respect to the same center, and, thus, if $\omega(u)$ is not a single steady state, then some of its elements  must be contained on a normally hyperbolic manifold of steady states. They eliminate this possibility by viewing $u$ as a solution of an autonomous system to which they can apply a convergence result in \cite{pol2, hale} which also requires exponential decay in $h$.

Our result complements the result of \cite{pol1} in the sense that by imposing a stronger assumption on $f$, we may allow $h$ to have a much slower decay.
We follow the ideas  in \cite{cdpe}, from which we have kept the essential assumptions on $f$ that are needed for this approach:
We assume that
\begin{enumerate}
\item[(f1)] $f\in C^1(\mathbb R)$ is such that $f(0)=0$ and $f'(0)<0$,
\item[(f2)] Any nontrivial, nonnegative solution of the problem \eqref{stat0}
 is non-degenerate, that is, the linearized operator
\beq\label{linear}
\Delta +f'(w)\quad\mbox{on $L^2(\RR^N)$ with domain $W^{2,2}(\RR^N)$}
\eeq
restricted to $L^2_r(\RR^N)$ has a bounded inverse.
\end{enumerate}
As for the perturbation term, we assume that $h:\RR^N\times[0,\infty)\to\RR$ satisfies the following conditions:
\begin{enumerate}
\item[(h1)] $\lim_{t\to\infty}||h(\cdot,t)||_{L^\infty(\mathbb R^N)}=0$.
\end{enumerate}
We also assume that there exists a decreasing function $\hat h:[0,\infty)\to[0,\infty)$ such that
\begin{enumerate}
\item[(h2)] $||h(\cdot,t)||_{L^2(\mathbb R^N)}\le \hat h(t)$ for all $t\ge0$,
\item[(h3)] $\int^{\infty}\hat h^2(t)dt<\infty$,
\item[(h4)] $\displaystyle\lim_{t\to\infty}\frac{\hat h^2(t)}{\int_t^{\infty}\hat h^2(s)ds}=0$,
\item[(h5)] $\displaystyle\int_1^\infty\Bigl(\int_t^{\infty}\hat h^2(s)ds\Bigr)^{1/2}dt<\infty$.
\end{enumerate}

Observe that from the monotonicity of $\hat h$ and $(h3)$, we have that
\beq\label{h2}
\lim_{t\to\infty}\hat h(t)=0.
\eeq
\bigskip

Our main result is

\begin{maintheorem}
Let all the above assumptions on $f$ and $h$ hold. Let $u(x,t)$ be a non-negative, bounded, globally defined solution of \eqref{eq1} satisfying
\beq\label{c0}
\lim_{|x|\to\infty}\sup_{t\in(0,\infty)}u(x,t)=0.
\eeq
Then  there exists   a solution $w$ of
\eqref{stat0}
such that $u(x,t)\to w(x)$ uniformly as $t\to\infty$. Moreover, for any fixed $K>0$,
\beq\label{c00}
\lim_{t\to\infty}\int_0^K||u(\cdot,t+s)-w||^2_{H^1}ds= 0.
\eeq
\end{maintheorem}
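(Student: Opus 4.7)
The plan is to combine the quasiconvergence theory for asymptotically autonomous parabolic equations with a modulation analysis anchored on the nondegeneracy hypothesis (f2). First I invoke \cite[Theorem~2.2]{pol1}: hypotheses (f1), (h1), and \eqref{c0} imply that either $\omega(u)=\{0\}$ (and then the theorem is immediate) or every element of $\omega(u)$ is a positive ground state of \eqref{stat0}. Assume the latter. Gidas--Ni--Nirenberg symmetry together with the uniqueness statement recalled in the introduction show that every such ground state is a translate of a single profile $w_0$, so $\omega(u)\subset\mathcal M:=\{w_0(\cdot-\xi):\xi\in\RR^N\}$. By (f2), $\mathcal M$ is a smooth $N$-dimensional manifold in $H^1(\RR^N)$ whose tangent space at $w_0(\cdot-\xi)$ coincides with $\ker(\Delta+f'(w_0(\cdot-\xi)))=\mathrm{span}\{\partial_{x_1}w_0(\cdot-\xi),\ldots,\partial_{x_N}w_0(\cdot-\xi)\}$, so the whole task reduces to showing that the base point along $\mathcal M$ converges.

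Set $F'=f$ and $E(u)=\tfrac12\int|\nabla u|^2-\int F(u)$. Multiplying the equation by $u_t$ and using Young's inequality with (h2) gives $\tfrac{d}{dt}E(u(\cdot,t))+\tfrac12\|u_t(\cdot,t)\|_{L^2}^2\le\tfrac12\hat h(t)^2$, so by (h3) the energy $E(u(\cdot,t))$ converges and $\int_t^\infty\|u_t(\cdot,s)\|_{L^2}^2\,ds\le C\int_t^\infty\hat h(s)^2\,ds$. A secondary observation that I will use in the sequel is that the monotonicity of $\hat h$ and Cauchy--Schwarz on unit intervals together turn (h5) into the bound $\hat h\in L^1(1,\infty)$.

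For $t$ large, compactness of $\omega(u)\subset\mathcal M$ places $u(\cdot,t)$ in a tubular neighborhood of $\mathcal M$, so (f2) with the implicit function theorem yields a unique decomposition $u(\cdot,t)=w_0(\cdot-\xi(t))+v(\cdot,t)$ with $v(\cdot,t)$ $L^2$-orthogonal to every $\partial_{x_i}w_0(\cdot-\xi(t))$. Projecting the equation on the tangent direction produces a modulation ODE $A(\xi,v)\xi'(t)=b(t)$ with $A$ uniformly invertible (by (f2)) and $|b(t)|\le C(\|h(\cdot,t)\|_{L^2}+\|v(\cdot,t)\|_{H^1}^2)$; projecting on the normal direction produces a coercive evolution $v_t=Lv+N(v)+h+\xi'\cdot\nabla w_0(\cdot-\xi)$, where $L$ has a spectral gap $\mu>0$ on $\ker(L)^\perp$ (again by (f2)) and $N$ is quadratic in $v$. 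Testing with $v$ and using $\langle v,\nabla w_0(\cdot-\xi)\rangle=0$ gives $\tfrac{d}{dt}\|v\|_{L^2}^2+\mu\|v\|_{H^1}^2\le C\hat h^2$, so $\|v(\cdot,t)\|_{L^2}\to 0$ (with the decay rate made quantitative by (h4)) and $\int_1^\infty\|v(\cdot,s)\|_{H^1}^2\,ds<\infty$.

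Putting everything together, $\int_1^\infty|\xi'(s)|\,ds\le C\int_1^\infty\|h(\cdot,s)\|_{L^2}\,ds+C\int_1^\infty\|v(\cdot,s)\|_{H^1}^2\,ds<\infty$, the first term being finite via (h5) (since $\hat h\in L^1$) and the second by the $H^1$ estimate just derived. Hence $\xi(t)\to\xi_\infty\in\RR^N$, and combining with $v\to 0$ in $H^1$ and continuity of translation in $H^1$, we conclude $u(\cdot,t)\to w_0(\cdot-\xi_\infty)$ uniformly (via Sobolev embedding after a parabolic smoothing argument) and in the time-averaged $H^1$ sense~\eqref{c00}. I expect the main obstacle to be closing the coupling between the modulation ODE for $\xi$ and the coercive equation for $v$ under only the mild decay allowed by (h2)--(h5); in particular one must ensure that the quadratic and modulation remainders do not negate the spectral gap $\mu$, which relies on the smallness of $\|v\|_{L^\infty}$ and on (h4) to absorb the $\hat h^2$-forcing into the linear exponential decay.
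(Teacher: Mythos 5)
Your proposal follows a genuinely different route than the paper (a modulation ansatz $u=w_0(\cdot-\xi(t))+v$ with an ODE for $\xi$ and a coercive evolution for $v$, as opposed to the paper's iterative Key Lemma proved by rescaling and compactness in the spirit of Korevaar--Mazzeo--Pacard--Schoen), but there is a critical gap that invalidates the central estimate.

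The flaw is the claimed spectral gap for the normal component $v$. You assert that $L=\Delta+f'(w_0(\cdot-\xi))$ is coercive on $\ker(L)^\perp$, and deduce
\begin{equation*}
\tfrac{d}{dt}\|v\|_{L^2}^2+\mu\|v\|_{H^1}^2\le C\hat h^2
\end{equation*}
from $\langle v,\nabla w_0(\cdot-\xi)\rangle=0$. This is false for the ground states in question. Assumption (f2) only controls the kernel (it is exactly the $N$-dimensional translation eigenspace); it says nothing about the absence of positive eigenvalues. And indeed, for the model $f(s)=s^p-s$ the ground state is a mountain-pass saddle of $J$, so $L$ has at least one strictly positive eigenvalue $\lambda_1>0$ with eigenfunction $\psi_1\perp\ker L$. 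Testing the $v$-equation with $v=\psi_1$ gives $-\langle Lv,v\rangle=-\lambda_1\|\psi_1\|_{L^2}^2<0$, so there is no coercivity on the orthogonal complement of the kernel and the differential inequality above does not hold. Without it, the claimed decay of $\|v\|$ and the summability of $|\xi'|$ both collapse, and the proof does not close.

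This is precisely the difficulty that the paper's argument is built to overcome. The paper's Main Lemma handles the unstable modes by a separate mechanism: it decomposes the rescaled limit $\phi$ into the positive-eigenvalue part $\sum_i B_ie^{\lambda_i t}\psi_i$, the kernel part $\sum_i C_i\partial_{x_i}w_0$, and a decaying remainder $\tilde\theta$, and then uses the near-monotonicity of the energy $J$ (Lemma~\ref{ln}) together with the hypothesis $\eta^2(y_n,t_n)\ge\int_{t_n}^\infty\hat h^2$ to prove in Lemma~\ref{B} that all $B_i$ vanish. That energy argument is exactly the ingredient your proposal is missing: it is what kills the unstable directions that a naive coercivity estimate cannot. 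If you want to pursue the modulation route, you would need to split $v$ further into unstable and stable/decaying parts and supply an independent argument (most likely again via the Lyapunov functional, using (h3) and the assumed closeness to $w_0$) that the unstable coefficients are forced to stay small -- which amounts to redoing the paper's Lemma~\ref{B} in modulation variables. As stated, the spectral-gap step is wrong and the proof fails.

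Two smaller remarks. First, your observation that (h5) plus monotonicity of $\hat h$ yields $\hat h\in L^1(1,\infty)$ is correct, but it is weaker than what the paper actually iterates: the Key Lemma needs the full quantity $\int(\int_t^\infty\hat h^2)^{1/2}dt$ to control the accumulated drift of the translate $z$, not merely $\int\hat h$. Second, the reduction $\omega(u)\subset\mathcal M$ is fine in spirit (since $\omega(u)$ is connected and, by (f2), the set of ground states is locally a disjoint union of smooth $N$-manifolds), but you should not appeal to a global uniqueness of the ground state, which is not among the hypotheses.
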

\bigskip

Note that $\hat h(t)=t^\alpha$ obeys all conditions $(h1)$ through $(h5)$ for
$$ \alpha<-\frac{3}{2}.$$

We do not know if this is optimal, considering Example 2.3 built in \cite{foldes}.
Indeed, for a nonlinearity like $f(s)=s^p-s$ with $p>1$ and subcritical, it is known that the ground state solution $w$ of \eqref{stat0} is unique and symmetric with respect to some origin $x_0$. F\"oldes constructs  a bounded differentiable function $\eta:[0,\infty)\to\RR$, satisfying $|\eta'(t)|\le C/t$ for all $t>0$, and such that there are two sequences $s_n\to\infty$ and $t_n\to\infty$, with $\eta(s_n)=0$, $\eta(t_n)=1$ and
then he defines
$$u(x,t)=w(x_1+\eta(t),x_2,\ldots,x_N).$$
$u$ satisfies
$$u_t=\Delta u+f(u)+h(x,t),\quad h(x,t)=w_{x_1}(x_1+\eta(t),x_2,\ldots,x_N)\eta'(t),$$
hence $|h(x,t)|\le C_0/t$, and $w(x_1+s,x_2,\ldots,x_N)\in\omega(u)$ for all $s\in[0,1]$, implying that $\omega(u)$ is not a single steady state.

\begin{remark}\label{rem1}{\rm
By \cite[Theorem 2.2]{pol1},  under the assumptions of our theorem, any sequence $\{t_n\}$, $t_n\to\infty$, contains a subsequence, which we re-label the same, such that
$u(\cdot,t_n)$ converges uniformly either to $0$ or to a ground state solution $w$ of  \eqref{stat0}.
 }
\end{remark}
 Condition $(f2)$ and our assumptions on $h$
 allow the use of an iterative procedure induced by the Main Lemma (see Lemma \ref{paso1}) which roughly states that once $u$ becomes close to a steady state $w$ in an interval $[t^*,t^*+T]$,  at distance $d$, then a small translation of $w$ in an amount proportional to $d$ bounds the distance on the later time interval $[t^*+T,t^*+2T]$, by half of $d$ plus a constant times the $L^2$ norm of $\hat h$ in the interval $[t^*,\infty)$. All this   provided that $t^*$ sufficiently large and $T$ is larger than a certain constant. This implies, after iteration, that $u$ at later times never gets farther from $w$ than a constant times $d$. This idea was  used in \cite{mazzeo} in order to prove uniqueness of asymptotic profiles of solutions in the neighborhood of isolated singularities of an elliptic equation involving critical exponents. It was also used in \cite{cdpe} to treat the autonomous case.

Our paper is organized as follows. In section 2 we demonstrate our main result.  Its proof relies on a key lemma which is proved in section 3. Finally, in the appendix, we prove some  technical results that are used throughout the paper.

\section{Proof of the Main Theorem}
In this section we deduce our theorem from the following key lemma. The rest of the paper is devoted to the proof of this lemma.

 From now on we denote by $u(x,t)$ a fixed non-negative, bounded, globally defined solution of \eqref{eq1} satisfying \eqref{c0}, and
 $$||\cdot||_{L^2}=||\cdot||_{L^2(\mathbb R^N)}\quad\mbox{and}\quad||\cdot||_{H^1}=||\cdot||_{H^1(\mathbb R^N)}.$$

\begin{mainlemma}
Let $w$ be a non trivial nonnegative solution of \eqref{stat0}. Then there exist $T>1$, $t_0>0$ and $\eta_0>0$ such that for any $k\in\mathbb N$ and any $t^*>t_0$ with $\int_0^{T}||u(\cdot,s+t^*)-w(\cdot)||^2_{H^1}ds\le \eta_0^2$, it holds that
\be\label{kl}
\Bigl(\int_0^{T}||u(\cdot,s+t^*+kT)-w(\cdot)||^2_{H^1}ds\Bigr)^{1/2}\le \bar C\Bigl(\int_0^{T}||u(\cdot,s+t^*)-w(\cdot)||^2_{H^1}ds\Bigr)^{1/2}\nonumber\\+\bar C\int_{t^*-T}^\infty \Bigl(\int_t^\infty\hat h^2(s)ds\Bigr)^{1/2}dt
\ee
for some positive constant $\bar C=\bar C(T,w)$.

\end{mainlemma}

We will also need the following standard result concerning the functional
$$J(u)(t)=\frac{1}{2}\int_{\mathbb R^N}|\nabla u|^2dx-\int_{\mathbb R^N}F(u)dx,$$
where $F(u)=\int_0^uf(s)ds$, that we prove  for the sake of completeness.
\begin{lemma}\label{ln}
For any $t_2>t_1$,
\beq\label{jcomp}
J(u)(t_2)\le J(u)(t_1)+\frac{1}{4}\int_{t_1}^{t_2}\hat h^2(s)ds.
\eeq
\end{lemma}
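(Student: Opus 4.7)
My plan is the standard energy identity for semilinear heat equations perturbed by a forcing term, together with Young's inequality to absorb the cross term.

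First, I would formally multiply the equation $u_t=\Delta u+f(u)+h$ by $u_t$ and integrate over $\mathbb R^N$. Integration by parts in the Laplacian term gives $\int u_t\Delta u\,dx=-\frac{d}{dt}\bigl(\tfrac12\int|\nabla u|^2dx\bigr)$, and the reaction term yields $\int f(u)u_t\,dx=\frac{d}{dt}\int F(u)dx$ since $F'=f$. Collecting,
\begin{equation*}
\frac{d}{dt}J(u)(t)=-\int_{\mathbb R^N} u_t^2\,dx+\int_{\mathbb R^N} h\,u_t\,dx.
\end{equation*}

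Next, I would apply Young's inequality pointwise in the form $|h u_t|\le u_t^2+\tfrac14 h^2$, giving
\begin{equation*}
\int_{\mathbb R^N} h u_t\,dx\le\int_{\mathbb R^N} u_t^2\,dx+\tfrac14\int_{\mathbb R^N} h^2\,dx.
\end{equation*}
Substituting and cancelling the $\int u_t^2$ terms yields $\frac{d}{dt}J(u)(t)\le\tfrac14\|h(\cdot,t)\|_{L^2}^2$, which by hypothesis $(h2)$ is at most $\tfrac14\hat h^2(t)$. Integrating from $t_1$ to $t_2$ gives the desired inequality \eqref{jcomp}.

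The only real obstacle is justifying the computation, since the notion of bounded globally defined solution used in the paper is only $u\in C([0,\infty),H^1(\mathbb R^N))$ in a weak sense, so $u_t$ is not a priori an admissible test function. I would overcome this via standard parabolic bootstrapping: since $u$ is bounded, $f(u)+h\in L^\infty_{\mathrm{loc}}((0,\infty),L^2(\mathbb R^N))$, and interior parabolic regularity for the heat equation upgrades $u$ to a function with $u_t\in L^2_{\mathrm{loc}}((0,\infty),L^2(\mathbb R^N))$ and $u\in L^2_{\mathrm{loc}}((0,\infty),H^2(\mathbb R^N))$. In this class the chain rule $\frac{d}{dt}J(u)(t)=\langle -\Delta u-f(u),u_t\rangle_{L^2}$ is rigorous (either by direct computation or by a mollification/approximation argument in time), and then substituting $-\Delta u-f(u)=h-u_t$ from the PDE reproduces the identity above. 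With this justification, the integration in time from $t_1$ to $t_2$ is immediate and yields \eqref{jcomp}.
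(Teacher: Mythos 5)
Your proof is correct and follows essentially the same route as the paper's: derive the energy dissipation identity $\frac{d}{dt}J(u)(t)=-\int u_t^2+\int h u_t$ and bound the cross term to get $\frac{d}{dt}J\le\frac14\hat h^2(t)$. The paper uses Cauchy--Schwarz plus the scalar bound $-a^2+\hat h a\le\hat h^2/4$, while you use pointwise Young's inequality; these are the same estimate. Your extra remarks about justifying $u_t$ as a test function flesh out what the paper dismisses in one sentence as a standard approximation argument.
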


\begin{proof}
The following formal calculations can be justified  approximating $h$ and $u_0$ by appropriate functions:
\be\label{dec}
\frac{d}{dt}J(u)(t)&=&\int_{\mathbb R^N}\nabla u\cdot\nabla u_t-f(u)u_tdx\nonumber\\
&=&-\int_{\mathbb R^N}(\Delta u +f(u)+h(x,t))u_tdx+\int_{\mathbb R^N}h(x,t)u_tdx\nonumber\\
&\le& -\int_{\mathbb R^N}(u_t)^2dx+\hat h(t)\Bigl(\int_{\mathbb R^N}(u_t)^2dx\Bigr)^{1/2}\nonumber\\
&\le& \frac{\hat h^2(t)}{4},
\ee
from where \eqref{jcomp} follows.
\end{proof}
\begin{proof}[Proof of the Main Theorem]
 We will first prove that $u(\cdot,t)$ is uniformly bounded in $H^1$. From \eqref{jcomp} in Lemma  \ref{ln},
 there exists $C_0>0$ such that
$$\frac{1}{2}\inte|\nabla u|^2dx-\inte F(u)dx\le C_0\quad\mbox{for all $t\ge t_0$}.$$

Next, from $(f1)$ and \eqref{c0}, there exists $R_0>0$ such that
$$F(u)\le \frac{f'(0)}{4}u^2\quad\mbox{for all $(x,t)$ with $|x|\ge R_0$,}$$
hence
\ben
\frac{1}{2}\inte|\nabla u|^2dx+\frac{|f'(0)|}{4}\inte u^2dx
&\le& C_0+\inte F(u)dx+\frac{|f'(0)|}{4}\inte u^2dx\\
&\le& C_0+\int_{B_{R_0}} F(u)dx+\frac{|f'(0)|}{4}\int_{B_{R_0}} u^2dx\\
&\le& C_0+|B_{R_0}|\Bigl(\sup_{s\in[0,M]}F(s)+M^2\frac{|f'(0)|}{4}\Bigr)
\een
and thus $||u(\cdot,t)||_{H^1}$ is uniformly bounded.

From Remark \ref{rem1}, we may assume that there is a sequence $\{t_n\}$, $t_n\to\infty$, such that
$u(\cdot,t_n)$ converges uniformly  to a nonnegative solution $w$ of  \eqref{stat0}.  As $u(\cdot,t_n)$ is uniformly bounded in $H^1$, there is a subsequence, which we re-label the same, converging weakly in $H^1$ and, up to a subsequence (still re-labeled the same), strongly in $L^2$.  Now, $||u(\cdot,t_n)-w(\cdot)||^2_{L^2}$ as $n\to\infty$ implies that
$$\int_0^{T}||u(\cdot,s+t_n)-w(\cdot)||^2_{H^1}ds\to0\quad\mbox{as}\quad n\to\infty,$$
(see Proposition \ref{teo22}a)(ii) in the Appendix).
If $||u(\cdot,t)||_{L^\infty}\to0$ as $t\to\infty$, then
$$\int_0^{T}||u(\cdot,s+t)||^2_{H^1}ds\to0\quad\mbox{as}\quad t\to\infty. $$
If $||u(\cdot,t)||_{L^\infty}\not\to0$ as $t\to\infty$, then again
from Remark \ref{rem1}, we may assume that $w$ is a non trivial, nonnegative solution  of  \eqref{stat0}.

Let now $\varepsilon>0$ and let $T$, $\eta_0$ and $t_0$ be as in the Key Lemma.
Then there exists $n_0$ such that $t_{n_0}\ge t_0$,
$$\int_0^{T}||u(\cdot,s+t_{n_0})-w(\cdot)||^2_{H^1}ds<\min\{\eta_0^2,\varepsilon\},$$
$$\int_{t_{n_0}}^\infty\hat h^2(s)ds<\varepsilon\quad\mbox{and}\quad\int_{t_{n_0}-T}^\infty \Bigl(\int_t^\infty\hat h^2(s)ds\Bigr)^{1/2}dt<\sqrt{\varepsilon}.$$
Hence, from the Key Lemma, for any $k\in\mathbb N$ we have
 \ben
\int_0^{T}||u(\cdot,s+t_{n_0}+kT)-w(\cdot)||^2_{H^1}ds\le 2\bar C\varepsilon.
\een
Let $t\ge t_{n_0}$. Then for some $k\in\mathbb N$, $t_{n_0}+kT\le t\le t_{n_0}+(k+1)T$, and thus $t=t_{n_0}+kT+\tau$ with $\tau\in[0,T]$.
Since there exists $C=C(T)$ such that
\ben
\int_{0}^{T}\!\!||u(\cdot,s+t_{n_0}+\tau+kT)-w||^2_{H^1}ds
\le C\Bigl(\int_{0}^{T}\!\!||u(\cdot,s+t_{n_0}+kT)-w||^2_{H^1}ds+\int_{t_{n_0}}^\infty \!\!\hat h^2(s)ds\Bigr)
\een
(see Proposition \ref{teo21} in the Appendix),  we obtain that there exists $C_0>0$ such that
\ben
\int_{0}^{T}||u(\cdot,s+t_{n_0}+\tau+kT)-w||^2_{H^1}ds
\le C_0\varepsilon,
\een
and thus for any $t\ge t_{n_0}$
$$\int_{0}^{K}||u(\cdot,s+t)-w||^2_{H^1}ds\le \Bigl(\Bigl[\frac{K}{T}\Bigr]+1\Bigr)C_0\varepsilon,$$
proving \eqref{c00}.

Finally, we observe that as each sequence $\{t_n\}$ has a subsequence (renamed the same) such that $u(x,t_n)$ converges uniformly to some solution $\tilde w$ of \eqref{stat0}, by the previous argument, we have that
\ben
\lim_{t\to\infty}\int_0^K||u(\cdot,s+t)-\tilde w||^2_{H^1}ds= 0,
\een
implying that $w=\tilde w$ and thus the uniform convergence follows.
\end{proof}

\section{Proof of  the Key Lemma }\label{3}
This section contains very heavy calculations, so in order to simplify the notation, we set
\beq\label{defeta}\eta(y,t):=\Bigl(\int_0^{T}||u(\cdot,s+t)-w(\cdot+y)||^2_{H^1}ds\Bigr)^{1/2},
\eeq
where $w$ is a fixed nontrivial, nonnegative solution of \eqref{stat0}. The proof of the Key Lemma will follow by induction from the following crucial result.

\begin{lemma}{\rm\bf(Main Lemma)}\label{paso1}
There exist $D>0$, $T>1$, $t_0>0$, $\eta_0>0$ and $A>0$ so that for all $(y,t)$ with $|y|\le1$, $t\ge t_0$, $\eta(y,t)\le \eta_0$,
there exists   $z\in\mathbb R^N$, with $|z|\le D\eta(y,t)$,   such that
\ben
\eta^2(z+y,t+T)\le \frac{1}{4}\eta^2(y,t)+A^2\int_{t}^\infty \hat h^2(s)ds.
\een
\medskip
\end{lemma}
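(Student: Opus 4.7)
The plan is to linearize around a translate $w(\cdot + y + z)$ with $z$ chosen by a modulation argument to annihilate the neutral translation modes of the linearized operator, and then exploit the spectral gap of that operator together with the near-monotonicity of $J$ provided by Lemma \ref{ln}. Since $|y|\le 1$ keeps the base point in a compact set, the spectral constants will be uniform.

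First I would set $v(x,s):=u(x,t+s)-w(x+y+z)$ for $s\in[0,2T]$. Using $\Delta w+f(w)=0$,
\begin{equation*}
v_s=\Delta v+f'(w(\cdot+y+z))\,v+R(v)+h(\cdot,t+s),\qquad R(v)=O(v^2).
\end{equation*}
By $(f1)$ the operator $L_z:=\Delta+f'(w(\cdot+y+z))$ has essential spectrum in $(-\infty,f'(0)]\subset(-\infty,0)$, and by the radial symmetry of $w$ combined with $(f2)$, $\ker L_z=\mathrm{span}\{\partial_{x_i}w(\cdot+y+z):i=1,\dots,N\}$. An implicit-function argument applied to the map $z\mapsto\bigl(\langle u(\cdot,t+T)-w(\cdot+y+z),\partial_{x_i}w(\cdot+y+z)\rangle_{L^2}\bigr)_{i=1}^N$ produces, provided $\eta(y,t)\le \eta_0$ is small, a unique $z$ with $|z|\le D\,\eta(y,t)$ such that $v(\cdot,T)$ is $L^2$-orthogonal to $\ker L_z$.

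The operator $L_z$ may still carry a simple positive eigenvalue $\lambda_+$ corresponding to the unstable direction at the ground state. That direction is controlled by Lemma \ref{ln}: the functional $J$ has a strict local maximum at $w$ along the unstable mode and decreases along $u(\cdot,t)$ up to an error $\tfrac14\int \hat h^2$, so the projection of $v$ onto the positive eigenspace is bounded by $C\bigl(\int_t^\infty\hat h^2\bigr)^{1/2}$. On the orthogonal complement of $\ker L_z$ and the positive eigenspace, $L_z\le -\mu<0$ uniformly; a standard parabolic $H^1$-energy estimate for $v$ combined with $\|R(v)\|_{L^2}\lesssim \|v\|_{L^\infty}\|v\|_{L^2}\lesssim \eta_0\,\|v\|_{H^1}$ (via Sobolev embedding and smallness) then yields a schematic inequality
\begin{equation*}
\int_0^T\!\|v(\cdot,T+s)\|_{H^1}^2\,ds\le C_1 e^{-2\mu T}\!\int_0^T\!\|v(\cdot,s)\|_{H^1}^2\,ds+C_2\eta_0^2\,\eta^2(y,t)+C_3\!\int_t^\infty\!\hat h^2(s)\,ds.
\end{equation*}
Because $|z|\le D\eta(y,t)$, the first integral on the right is $\le C_4\,\eta^2(y,t)$, so choosing $T$ so large that $C_1C_4 e^{-2\mu T}\le 1/8$ and then $\eta_0$ so small that $C_2\eta_0^2\le 1/8$ delivers the contraction factor $1/4$, while the last term supplies the $A^2\int_t^\infty\hat h^2$ error.

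The main obstacle will be the unstable eigendirection of $L_z$ in the non-autonomous setting: in the autonomous case of \cite{cdpe} strict monotonicity of $J$ closes the argument directly, whereas here $J$ fluctuates by up to $\tfrac14\int\hat h^2$, so the unstable mode can only be bounded by $\bigl(\int_t^\infty\hat h^2\bigr)^{1/2}$, which is precisely what forces the error term in the statement. A secondary point is upgrading the instantaneous orthogonality enforced at $s=T$ to a bound on the time-integrated quantity $\eta^2(y+z,t+T)$; I would close that gap by invoking Proposition \ref{teo21} to transfer $L^2 H^1$-smallness across short time shifts together with parabolic regularity to upgrade $L^2$-control to $H^1$-control.
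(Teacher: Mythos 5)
Your proposal pursues the right broad strategy (modulation onto a translate to kill the kernel modes, spectral gap for the decay, energy near-monotonicity for the unstable mode), and it correctly identifies both of the genuine difficulties: the unstable eigendirection and the passage from pointwise to time-integrated control. However, your route is a direct nonlinear estimate, whereas the paper argues by compactness/contradiction in a rescaled variable $\phi_n=(u(\cdot,\cdot+t_n)-w(\cdot+y_n))/\eta_n$, passing to a solution of the \emph{linear} equation $\phi_t=\Delta\phi+f'(w_0)\phi$ and then analyzing its spectral decomposition. That rescaled-limit device neatly avoids the two places where your direct argument has gaps.

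First, your bound $\|R(v)\|_{L^2}\lesssim\|v\|_{L^\infty}\|v\|_{L^2}\lesssim\eta_0\|v\|_{H^1}$ invokes $H^1\hookrightarrow L^\infty$, which fails for $N\ge2$. The smallness hypothesis $\eta(y,t)\le\eta_0$ is only an integrated $H^1$-smallness and does not give the pointwise $L^\infty$-smallness that the quadratic remainder estimate needs. The paper sidesteps this entirely: because it works with the rescaled $\phi_n$, the nonlinearity appears as $f'(\bar u_n)\phi_n$ with $\bar u_n$ between $u(\cdot,\cdot+t_n)$ and $w(\cdot+y_n)$, and $f'(\bar u_n)\to f'(w_0)$ uniformly thanks to Remark \ref{rem1}, the decay assumption \eqref{c0}, and Proposition \ref{teo22}(b). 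If you insist on the direct route you would need to import that uniform $L^\infty$-convergence separately; it does not follow from Sobolev.

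Second, and more substantially, your treatment of the unstable mode is asserted rather than proven, and the paper's actual mechanism is stronger and more subtle than ``bounded by $(\int_t^\infty\hat h^2)^{1/2}$.'' The paper's Lemma \ref{B} proves that the coefficients $B_i$ of the positive eigenmodes of $\phi$ are \emph{zero}. It does this by a Taylor expansion of $J$ around $w_0$ (equation \eqref{519}), integrated over a long auxiliary interval $[1,t_0]$: if some $B_{i_0}\ne0$, the quadratic form $\langle J''(w_0)\phi,\phi\rangle$ contributes a term $\sim -B_{i_0}^2 e^{2\lambda_{i_0}t_0}$, while the lower bound \eqref{n10} from Lemma \ref{ln} only degrades linearly in $t_0$. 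Making this contradiction work requires the normalization $\eta_n^2\ge\int_{t_n}^\infty\hat h^2$, which is precisely why the paper splits the argument into Claim 1 (where this holds) and Claim 2 (where the forcing dominates and a crude parabolic estimate, Lemma \ref{lemaprevio}(ii), closes the lemma directly). Your proposal has no analogue of this dichotomy and no argument for the quantitative bound on the unstable projection; as written, the sentence ``the projection of $v$ onto the positive eigenspace is bounded by $C(\int_t^\infty\hat h^2)^{1/2}$'' is doing the entire job of Lemma \ref{B} without proof. Your stated ``schematic inequality'' also silently absorbs the unstable mode into the $e^{-2\mu T}$ decay, which is wrong without first projecting it out. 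So the proposal is a plausible sketch of a different route, but it is incomplete precisely at the step the paper spends most of its effort on.
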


 We will prove this lemma in two steps.

\noindent{\bf Claim 1.}
There exist $D>0$, $T>1$, $t_0$, $\eta_0$ so that for all $(y,t)$ with $|y|\le1$, $t\ge t_0$, $\eta(y,t)\le \eta_0$, and
$$\eta^2(y,t)\ge \int_t^\infty \hat h^2(s)ds,$$
there exists   $z\in\mathbb R^N$, with $|z|\le D\eta(y,t)$,  such that
\ben
\eta^2(z+y,t+T)\le \frac{1}{4}\eta^2(y,t).\een
\medskip

\noindent{\bf Claim 2.}

Let $T>1$ be as in Claim 1 and let $y\in\RR^N$. If
$$\eta^2(y,t)\le \int_t^\infty \hat h^2(s)ds,$$
then there exists a positive constant $A$ such that
$$\eta^2(y,t+T)\le A^2\int_{t}^\infty \hat h^2(s)ds.$$

\begin{proof}[Proof of Claim 1.]
We note that Claim 1 is equivalent to

\noindent{\bf Claim 1'.} There exist $D>0$, $T>1$  such that for every sequence $(y_n,t_n)$, with
\beq\label{caso2}
|y_n|\le1,\quad t_n\to\infty,\quad\eta(y_n,t_n)\to0\quad\mbox{and}\quad \eta^2(y_n,t_n)\ge \int_{t_n}^\infty \hat h^2(s)ds,
 \eeq
 there exist a subsequence $(y_{n'}, t_{n'})$ and a sequence $\{z_{n'}\}$, with $|z_{n'}|\le D\eta(y_{n'},t_{n'})$, satisfying
\beq\label{p3}
\eta^2(z_{n'}+y_{n'},t_{n'}+T)\le \frac{1}{4}\eta^2(y_{n'},t_{n'}).\eeq
The proof of this Claim will follow after several lemmas.

Let $T>1$ be a constant to be fixed later, and let $(y_n,t_n)$ be as above. By taking a subsequence if necessary, we may assume that $y_n\to y_0$ as $n\to\infty$. Also, by Remark \ref{rem1}, we may assume that $u(x,t_n)$ converges uniformly to some solution $\tilde w$ of \eqref{stat0}. This implies that  for any $K>1$ we have
\beq\label{cc}\lim_{n\to\infty}||u(\cdot,t+t_n)-\tilde w||_{L^\infty(\mathbb R^N\times[0,K])}=0,
 \eeq
 (see Proposition \ref{teo22}$b)$).
On the other hand, $w(\cdot+y_n)$ converges to $w(\cdot+y_0):=w_0(\cdot)$ in $H^1$, (see Proposition \ref{teo23}), hence  as $\eta(y_n,t_n)\to0$, we have that
\beq\label{c77}
\lim_{n\to\infty}\int_0^K||u(\cdot,t+t_n)-w_0||^2_{H^1}dt=0,
\eeq
and therefore $\tilde w=w_0$, and from \eqref{cc},
\beq\label{c7}
\lim_{n\to\infty}||u(\cdot,t+t_n)-w_0||_{L^\infty(\mathbb R^N\times[0,K])}=0.
\eeq
\bigskip

Set
$$\phi_n(x,t)=\frac{u(x,t+t_n)-w(x+y_n)}{\eta_n},$$
where $\eta$ is defined in \eqref{defeta} and $\eta_n=\eta(y_n,t_n)$.

We will show that $\phi_n$ converges to a solution $\phi$ of $\phi_t=\Delta \phi+f'(w_0)\phi$, and then use the properties of $\phi$ to obtain results concerning $\eta$ at later times.

Since
$\int_0^T||\phi_n(\cdot,t)||^2_{H^1}dt=1$,  there exists $r_n\in[1/2,1]$ such that $||\phi_n(\cdot,r_n)||_{H^1}\le 2$, and thus there is a subsequence  of $\{r_n\}$, still denoted the same, with
\beq\label{r0}
r_{n}\to r_0,\quad\mbox{ s.t. $\phi_{n}(\cdot,r_{n})$ converges weakly in $H^1$ to some function $\phi_0$}.
\eeq
Therefore, up to subsequences, we may assume that $\phi_{n}(\cdot,r_n)$ converges to $\phi_0$ in $L^2$.

\begin{lemma}\label{p5}
For each $K>1$, the integral $\int_0^K||\phi_n(\cdot,t)||^2_{H^1}dt$ is uniformly bounded in $n$.

\end{lemma}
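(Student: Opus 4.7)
The plan is to exploit the PDE that $\phi_n$ satisfies and then run an energy estimate combined with Gronwall, using the hypothesis of Claim 1' to control the forcing term involving $h/\eta_n$.

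First, I would write down the equation satisfied by $\phi_n$. Since $u$ solves \eqref{eq1} and $w(\cdot+y_n)$ is stationary, subtracting and dividing by $\eta_n$ yields
\beq\label{phieq}
(\phi_n)_t = \Delta \phi_n + g_n(x,t)\,\phi_n + \frac{h(x,t+t_n)}{\eta_n},
\eeq
where $g_n(x,t) = \int_0^1 f'\bigl(w(x+y_n)+s(u(x,t+t_n)-w(x+y_n))\bigr)ds$. Because $u$ is uniformly bounded in $L^\infty$, $w$ is bounded and $f\in C^1$, there is a constant $M$ independent of $n$ with $\|g_n\|_{L^\infty(\mathbb{R}^N\times[0,\infty))}\le M$.

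Multiplying \eqref{phieq} by $\phi_n$, integrating over $\mathbb{R}^N$, and applying Cauchy--Schwarz together with (h2) and Young's inequality on the last term, I obtain the energy estimate
\beq\label{enest}
\frac{d}{dt}\|\phi_n(\cdot,t)\|_{L^2}^2 + 2\|\nabla \phi_n(\cdot,t)\|_{L^2}^2 \le (2M+1)\|\phi_n(\cdot,t)\|_{L^2}^2 + \frac{\hat h^2(t+t_n)}{\eta_n^2}.
\eeq
Since $\int_0^T\|\phi_n(\cdot,s)\|_{H^1}^2\,ds=1$, by the pigeonhole principle there exists $s_n\in[0,T]$ with $\|\phi_n(\cdot,s_n)\|_{L^2}^2\le 1/T$.

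Now I invoke the hypothesis \eqref{caso2} of Claim 1', $\eta_n^2\ge \int_{t_n}^\infty \hat h^2(s)\,ds$, to get
$$\int_{s_n}^{K}\frac{\hat h^2(t+t_n)}{\eta_n^2}\,dt \le \frac{1}{\eta_n^2}\int_{t_n}^\infty \hat h^2(s)\,ds \le 1,$$
so that the forcing integral is uniformly bounded. Dropping the gradient term in \eqref{enest} and applying Gronwall's inequality on $[s_n,K]$ gives a uniform $L^2$ bound $\|\phi_n(\cdot,t)\|_{L^2}^2 \le C_K$ for $t\in[s_n,K]$. Reinserting this into \eqref{enest} and integrating yields a uniform bound on $\int_{s_n}^K\|\nabla \phi_n(\cdot,t)\|_{L^2}^2\,dt$, hence on $\int_{s_n}^K\|\phi_n(\cdot,t)\|_{H^1}^2\,dt$. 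Combined with $\int_0^T\|\phi_n\|_{H^1}^2\,dt=1$ and $s_n\in[0,T]$, the desired uniform bound on $\int_0^K\|\phi_n(\cdot,t)\|_{H^1}^2\,dt$ follows.

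The main obstacle is the forcing term $h/\eta_n$: a priori, dividing by $\eta_n\to 0$ could make this term blow up. The whole point of the specific assumption defining Claim 1' is that it exactly counterbalances this division, making $\int_{t_n}^\infty \hat h^2/\eta_n^2$ automatically bounded by $1$. Thus the proof is straightforward Gronwall, but crucially relies on which of the two cases (Claim 1 vs. Claim 2) one is in.
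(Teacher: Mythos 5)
Your proof is correct and follows essentially the same route as the paper: an energy estimate for $\phi_n$, a pigeonhole choice of a good starting time, the crucial hypothesis $\eta_n^2\ge\int_{t_n}^\infty\hat h^2$ from \eqref{caso2} to absorb the $h/\eta_n$ forcing, and Gronwall. The paper merely packages the Gronwall step as Lemma~\ref{lemaprevio}(ii) and selects the reference time $r_n\in[1/2,1]$ (with an $H^1$ bound there, which is reused later), but the argument is the same.
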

\begin{proof}
By Lemma \ref{lemaprevio} (ii) with $t_1=r_n+t_n$ and $t_2=K+t_n$, we have that
$$\int_{r_n}^K||\phi_n(\cdot,s)||^2_{H^1}ds\le C_1(\bar M)e^{2\bar MK}\Bigl(||\phi_n(\cdot,r_n)||^2_{H^1}+\frac{1}{\eta_n^2}\int_{r_n+t_n}^{K+t_n}\hat h^2(s)ds\Bigr),$$
where $\bar M=\max_{u\in[0,M]}|f'(u)|+1$, and $M$ is a bound for $u$.
 By construction, $||\phi_n(\cdot,r_n)||^2_{H^1}\le 4$,  from the third condition in \eqref{caso2},
$$\frac{1}{\eta_n^2}\int_{r_n+t_n}^{K+t_n}\hat h^2(s)ds\le 1,$$
and, as $r_n\le 1<T$,
$$\int_0^{r_n}||\phi_n(\cdot,s)||^2_{H^1}ds\le \int_0^{T}||\phi_n(\cdot,s)||^2_{H^1}ds=1,
$$
hence
\ben
\int_0^K||\phi_n(\cdot,s)||^2_{H^1}ds\le5 C_1(\bar M)e^{2\bar MK}+1.
\een

\end{proof}

\begin{lemma}\label{convdebil}
For each $K>1$, there is a subsequence of $\{\phi_n\}$, still denoted the same, which converges weakly to some $\phi$ in $L^2([0,K),H^1(\mathbb R^N))$  that is
$$\int_0^K\int_{\mathbb R^N}(\nabla(\phi_n-\phi)\cdot\nabla\varphi+(\phi_n-\phi)\varphi)dxds\to 0\quad\mbox{as $n\to\infty$}$$
for all $\varphi\in C^\infty_0(\mathbb R^N\times [0,K))$. Moreover, $\phi$ is a solution of
\beq\label{eqphi}
\begin{gathered}
\phi_t=\Delta \phi+f'(w_0)\phi\\
\phi(x,r_0)=\phi_0(x)
\end{gathered}
\eeq
where  $r_0$ is as in \eqref{r0}.
\end{lemma}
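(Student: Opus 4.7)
The proof splits into two main tasks: extracting the weak limit, and identifying that limit as a solution of \eqref{eqphi}.

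For the first task, by Lemma \ref{p5} the sequence $\{\phi_n\}$ is bounded in the reflexive Hilbert space $L^2([0,K), H^1(\mathbb R^N))$, so the Banach--Alaoglu theorem yields a subsequence (relabeled the same) with a weak limit $\phi \in L^2([0,K), H^1(\mathbb R^N))$. The convergence statement in the lemma is precisely this weak convergence, since the duality pairing is realized by the displayed integral against $\varphi \in C_0^\infty(\mathbb R^N \times [0,K))$.

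For the second task, I would derive a PDE for $\phi_n$ and pass to the limit. Writing $u(x,t+t_n) = w(x+y_n) + \eta_n \phi_n(x,t)$, subtracting the equations $u_t = \Delta u + f(u) + h(\cdot,\cdot+t_n)$ and $\Delta w(\cdot+y_n) + f(w(\cdot+y_n)) = 0$, and applying the fundamental theorem of calculus to $f$ yields, in the weak sense,
\begin{equation*}
(\phi_n)_t = \Delta \phi_n + g_n(x,t)\, \phi_n + \frac{h(x, t+t_n)}{\eta_n}, \qquad g_n := \int_0^1 f'\bigl(w(\cdot+y_n) + \theta \eta_n \phi_n\bigr)\, d\theta.
\end{equation*}
Testing against an arbitrary $\varphi \in C_c^\infty(\mathbb R^N \times [r_0, K))$ and integrating by parts, the linear terms involving $(\phi_n)_t$ and $\Delta \phi_n$ pass to the limit by weak convergence of $\phi_n$ in $L^2([0,K), H^1)$; the initial boundary contribution at $t = r_n$ passes using $r_n \to r_0$ and $\phi_n(\cdot, r_n) \to \phi_0$ in $L^2_{loc}$ from \eqref{r0}, producing the desired initial trace. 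For the nonlinear term, \eqref{c7} implies $\eta_n \phi_n = u(\cdot, \cdot+t_n) - w(\cdot+y_n) \to 0$ locally uniformly on $\mathbb R^N \times [0,K]$, while $w(\cdot+y_n) \to w_0$ locally uniformly, so continuity of $f'$ gives $g_n \varphi \to f'(w_0)\varphi$ strongly in $L^2$; combined with weak convergence of $\phi_n$, this yields $\iint g_n \phi_n \varphi \to \iint f'(w_0) \phi \varphi$.

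The hard part is the forcing term. Monotonicity of $\hat h$ and (h2) give
\begin{equation*}
\Bigl|\int_0^K\!\!\int \frac{h(\cdot, t+t_n)}{\eta_n}\, \varphi\, dx\, dt\Bigr| \le \frac{\hat h(t_n)}{\eta_n} \int_0^K \|\varphi(\cdot, t)\|_{L^2}\, dt,
\end{equation*}
and combining the standing hypothesis $\eta_n^2 \ge \int_{t_n}^\infty \hat h^2$ from \eqref{caso2} with condition (h4) forces $\hat h(t_n)/\eta_n \to 0$, so the forcing contribution vanishes in the limit. This is the main obstacle I anticipate: without the specific lower bound $\eta_n^2 \ge \int_{t_n}^\infty \hat h^2$ (which is precisely why Claim~2 must be treated by a different argument), $h/\eta_n$ need not tend to zero, and the limit equation for $\phi$ would acquire a nontrivial right-hand side, breaking the homogeneity that is exploited in subsequent arguments about the kernel of $\Delta + f'(w_0)$.
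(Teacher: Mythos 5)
Your proof is correct and follows essentially the same route as the paper's: extract the weak limit from the uniform bound in Lemma~\ref{p5}, write the exact linearized equation for $\phi_n$, and pass to the limit term by term using \eqref{c7} for the zeroth-order coefficient and the standing hypothesis $\eta_n^2\ge\int_{t_n}^\infty\hat h^2$ together with $(h4)$ to kill the forcing term. The only cosmetic differences are that the paper uses the mean-value form $f'(\bar u_n)$ of the Taylor remainder rather than your integral form $g_n$, and it explicitly isolates and bounds the contribution over the small interval between $r_n$ and $r_0$ (using the $|r_n-r_0|^{1/2}$ estimate from Lemma~\ref{p5}), a step you compress into ``the initial boundary contribution at $t=r_n$ passes using $r_n\to r_0$'' --- worth spelling out, since the boundary term and the shift of the time interval must be handled together, but both arguments amount to the same estimate.
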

\begin{proof}
The weak convergence follows from lemma \ref{p5}. We show next that $\phi$ satisfies \eqref{eqphi}. As
\be\label{16}
(\phi_n)_t&=&\Delta\phi_n+\frac{f(u(x,t+t_n))-f(w(x+y_n))}{\eta_n}+\frac{1}{\eta_n}h(x,t+t_n)\nonumber\\
&=&\Delta\phi_n+f'(\bar u_n)\phi_n+\frac{1}{\eta_n}h(x,t+t_n)
\ee
for some $\bar u_n$ between $u(x,t+t_n)$ and $w(x+y_n)$, by multiplying this equation by $\varphi\in C^\infty_0(\mathbb R^N\times [0,K))$ and  integrating over
$\mathbb R^N\times[r_n,K)$ we find that
\ben
\int_{r_n}^K\int_{\mathbb R^N}(\phi_n\varphi_t-\nabla\phi_n\cdot\nabla\varphi+f'(\bar u_n)\phi_n\varphi+\frac{1}{\eta_n}h(x,t+t_n)\varphi)dxdt\nonumber\\
=\int_{\mathbb R^N}\phi_n(x,r_n)\varphi(x,r_n)dx.
\een
Then,
\be\label{18}
\int_{r_0}^K\int_{\mathbb R^N}(\phi_n\varphi_t-\nabla\phi_n\cdot\nabla\varphi+f'( w_0)\phi_n\varphi)dxdt
-\int_{\mathbb R^N}\phi_0(x)\varphi(x,r_0)dx\qquad\qquad\qquad\nonumber\\
=\int_{r_0}^{r_n}\!\!\!\int_{\mathbb R^N}(\phi_n\varphi_t-\nabla\phi_n\cdot\nabla\varphi+f'(\bar u_n)\phi_n\varphi)dxdt
-\int_{\mathbb R^N}\!\!\!(\phi_0(x)\varphi(x,r_0)-\phi_n(x,r_n)\varphi(x,r_n))dx\nonumber\\
-\int_{r_0}^K\int_{\mathbb R^N}(f'(\bar u_n)-f'(w_0))\phi_n\varphi-\frac{1}{\eta_n}\int_{r_n}^K\int_{\mathbb R^N}h(x,t+t_n)\varphi dxdt\qquad\qquad
\ee

We show next that the right-hand side of \eqref{18} tends to 0 as $n\to\infty$: From lemma \ref{p5}, the first term satisfies
\ben
\Bigm|\int_{r_0}^{r_n}\int_{\mathbb R^N}(\phi_n\varphi_t-\nabla\phi_n\cdot\nabla\varphi+f'(\bar u_n)\phi_n\varphi)dxdt\Bigm|
\!\!\!&\le&\!\!\!|r_0-r_n|^{1/2}\tilde C(\varphi)\bar M\Bigl(\int_0^1||\phi_n(\cdot,s)||^2_{H^1}ds\Bigr)^{1/2}\nonumber\\
\!\!\!&\le&\!\!\!|r_0-r_n|^{1/2} C(\varphi),
\een
the second term tends to 0 because $\phi_n(x,r_n)\to \phi_0(x)$ in $L^2$,
\ben
\Bigm|\int_{r_0}^K\int_{\mathbb R^N}(f'(\bar u_n)-f'(w_0))\phi_n\varphi\Bigm|&\le&C(\varphi)\int_{r_0}^K\Bigl(\int_{\mathbb R^N}|f'(\bar u_n)-f'(w_0)|^2\phi_n^2dx\Bigr)^{1/2}dt\\
&\le&C(\varphi)||f'(\bar u_n)-f'(w_0)||_{L^\infty(\mathbb R^N\times[0,K])}\int_{r_0}^K||\phi_n(\cdot,t)||^2_{H^1}dt\nonumber
\een
which tends to $0$ by Lemma \ref{p5} and \eqref{c7}. Finally, by \eqref{h2} and \eqref{caso2}, the last term satisfies
$$\Bigm|\frac{1}{\eta_n}\int_{r_n}^K\int_{\mathbb R^N}h(x,t+t_n)\varphi dxdt\Bigm|\le \frac{1}{\eta_n}\hat h(t_n)K\int_0^K||\varphi(\cdot,s)||_{L^2}ds
\le C(K,\varphi)\frac{\hat h(t_n)}{(\int_{t_n}^\infty\hat h^2(s)ds)^{1/2}}$$
and tends to 0 by $(h4)$.
Therefore, by the weak convergence of $\phi_n$ to $\phi$ we obtain that
$$\int_{r_0}^K\int_{\mathbb R^N}(\phi\varphi_t-\nabla\phi\cdot\nabla\varphi+f'( w_0)\phi\varphi)dxdt
-\int_{\mathbb R^N}\phi_0(x)\varphi(x,r_0)dx=0$$
and thus $\phi$ is a weak solution of \eqref{eqphi}. It follows from standard linear parabolic theory that the weak-$H^1$ solution $\phi$ is actually a classical solution of class $C^{2,1}$ and furthermore, $\phi\in C((0,\infty), L^2(\mathbb R^N))$.
\end{proof}
The following  lemma concerns the strong convergence of $\phi_n$.
\begin{lemma}\label{ln2}
For any $K>1$ we have
\begin{enumerate}
\item[a)]
\ben
\lim_{n\to\infty}(\sup_{t\in[1,K]}||\phi_n(\cdot,t)-\phi(\cdot,t)||_{L^2})=0.
\een
\item[b)]
\ben\lim_{n\to\infty}\int_1^K||\phi_n(\cdot,t)-\phi(\cdot,t)||^2_{H^1}dt=0.
\een
\end{enumerate}
\end{lemma}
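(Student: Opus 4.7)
The plan is to derive an evolution equation for the difference $\psi_n:=\phi_n-\phi$ and then carry out a standard $L^2$ energy estimate followed by Gr\"onwall. Subtracting equation \eqref{eqphi} from the PDE \eqref{16} satisfied by $\phi_n$, I get
$$(\psi_n)_t=\Delta\psi_n+f'(w_0)\psi_n+\bigl(f'(\bar u_n)-f'(w_0)\bigr)\phi_n+\frac{1}{\eta_n}h(x,t+t_n).$$
Multiplying by $\psi_n$, integrating over $\RR^N$, and using Young's inequality to absorb the cross terms, I expect to arrive at an inequality of the form
$$\frac{d}{dt}\|\psi_n(\cdot,t)\|_{L^2}^2+2\|\nabla\psi_n(\cdot,t)\|_{L^2}^2\le C\|\psi_n(\cdot,t)\|_{L^2}^2+\|f'(\bar u_n)-f'(w_0)\|_{L^\infty}^2\|\phi_n(\cdot,t)\|_{L^2}^2+\frac{\hat h^2(t+t_n)}{\eta_n^2},$$
where $C$ depends only on $\bar M=\max_{[0,M]}|f'|+1$.

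Next, I would apply Gr\"onwall's inequality on $[r_n,t]$ for $t\in[1,K]$ (noting $t-r_n\le K$) to obtain
$$\|\psi_n(\cdot,t)\|_{L^2}^2\le e^{CK}\Bigl(\|\psi_n(\cdot,r_n)\|_{L^2}^2+\|f'(\bar u_n)-f'(w_0)\|_{L^\infty(\RR^N\times[0,K])}^2\!\int_0^K\!\!\|\phi_n(\cdot,s)\|_{L^2}^2ds+\int_0^K\!\!\frac{\hat h^2(s+t_n)}{\eta_n^2}ds\Bigr).$$
Each term on the right tends to zero uniformly in $t$: the initial term because $\phi_n(\cdot,r_n)\to\phi_0$ in $L^2$ by Lemma \ref{convdebil} together with $\phi\in C((0,\infty),L^2)$ and $\phi(\cdot,r_0)=\phi_0$, which give $\phi(\cdot,r_n)\to\phi_0$ in $L^2$; the second term because $\|f'(\bar u_n)-f'(w_0)\|_{L^\infty}\to0$ by \eqref{c7} and $\int_0^K\|\phi_n\|_{L^2}^2$ is uniformly bounded by Lemma \ref{p5}; and the third because $\hat h$ is decreasing so $\int_0^K\hat h^2(s+t_n)ds\le K\hat h^2(t_n)$, and the assumption $\eta_n^2\ge\int_{t_n}^\infty\hat h^2(s)ds$ together with (h4) gives $\hat h^2(t_n)/\eta_n^2\to0$. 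This proves part $a)$.

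For part $b)$, I would integrate the energy inequality over $[1,K]$, yielding
$$2\int_1^K\|\nabla\psi_n(\cdot,t)\|_{L^2}^2dt\le\|\psi_n(\cdot,1)\|_{L^2}^2+C\int_1^K\|\psi_n(\cdot,t)\|_{L^2}^2dt+\int_1^K\!\!\bigl(\textrm{error terms}\bigr)dt.$$
All three terms on the right vanish as $n\to\infty$ by $a)$ and the estimates used to establish it, and combining the gradient bound with the $L^2$ convergence of $a)$ gives the $H^1$ convergence in $b)$.

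The main technical obstacle is the source term $\eta_n^{-1}h(\cdot,t+t_n)$: while it is multiplied by a potentially large prefactor $1/\eta_n$, one cannot afford to lose any power in $\eta_n$. This is precisely where the hypothesis $\eta_n^2\ge\int_{t_n}^\infty\hat h^2(s)ds$ imposed in \eqref{caso2} combines with condition (h4) to produce the needed decay; without both, the source term would not vanish. A secondary subtlety is showing $\psi_n(\cdot,r_n)\to0$ in $L^2$, which requires not only the $L^2$ convergence of $\phi_n(\cdot,r_n)$ already at hand but also the $L^2$-continuity of the limit $\phi$ at the shifting times $r_n\to r_0$, a fact that is guaranteed by the regularity $\phi\in C((0,\infty),L^2(\RR^N))$ noted at the end of Lemma \ref{convdebil}.
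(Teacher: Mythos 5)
Your proposal is correct and follows essentially the same approach as the paper: subtract the two PDEs to get an evolution equation for the difference $\theta_n=\phi_n-\phi$, perform an $L^2$ energy estimate, and apply Gr\"onwall, identifying the vanishing of the initial term via $\phi_n(\cdot,r_n)\to\phi_0$ and $\phi\in C((0,\infty),L^2)$, of the nonlinearity term via \eqref{c7} and Lemma~\ref{p5}, and of the source term via \eqref{caso2} combined with $(h4)$. The only cosmetic difference is that you work with squared $L^2$ norms and Young's inequality, whereas the paper divides by $\|\theta_n\|_{L^2}$ and works with unsquared quantities; your variant is marginally cleaner since it avoids dividing by a possibly vanishing norm.
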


\begin{proof}
Let $\theta_n(x,t):=\phi_n(x,t)-\phi(x,t)$.
From   \eqref{eqphi} in Lemma \ref{convdebil} and \eqref{16}, $\theta_n$ satisfies
\ben
(\theta_n)_t=\Delta\theta_n+(f'(\bar u_n)-f'(w_0))\phi_n+f'(w_0)\theta_n+\frac{1}{\eta_n}h(x,t+t_n).
\een
Multiplying by $\theta_n$ and integrating we obtain
\be\label{theta_n}
 \frac{1}{2}(||\theta_n||^2_{L^2})_t&\le&-||\nabla\theta_n||^2_{L^2}+ ||f'(w_0)||_\infty||\theta_n||^2_{L^2}\nonumber\\
 &+& \!\!\! ||(f'(\bar u_n)-f'(w_0))||_{L^\infty(\mathbb R^N\times[0,K])} ||\phi_n||_{L^2}||\theta_n||_{L^2}+\frac{1}{\eta_n}\hat h(t_n)||\theta_n||_{L^2},
 \ee
hence also
$$ \frac{1}{2}(||\theta_n||^2_{L^2})_t\le C||\theta_n||^2_{L^2}+ ||(f'(\bar u_n)-f'(w_0))||_{L^\infty(\mathbb R^N\times[0,K])} ||\phi_n||_{L^2}||\theta_n||_{L^2}+\frac{1}{\eta_n}\hat h(t_n)||\theta_n||_{L^2}.$$
Dividing both sides by $||\theta_n||_{L^2}$,
 multiplying by $e^{-Ct}$ and integrating over $[r_n,t]$, we find that
\ben
||\theta_n(\cdot,t)||_{L^2}\le C||\theta_n(\cdot,r_n)||_{L^2}+C(||(f'(\bar u_n)-f'(w_0))||_{L^\infty(\mathbb R^N\times[0,K])}+\frac{1}{\eta_n}\hat h(t_n)
\een
and thus Part a) follows  by the continuity of $f'$, the third in \eqref{caso2} and  $(h4)$.

Next, from \eqref{theta_n}
\ben
\frac{1}{2}||\theta_n(\cdot,K)||^2_{L^2}+\int_1^K||\theta_n(\cdot,t)||^2_{H^1}dt\le \frac{1}{2}||\theta_n(\cdot,1)||^2_{L^2}+C\int_1^K||\theta_n(\cdot,s)||^2_{L^2}ds\\
+C\Bigl(\int_1^K||\theta_n(\cdot,s)||^2_{L^2}ds\Bigr)^{1/2}\Bigl(\int_1^K||\phi_n(\cdot,s)||^2_{H^1}ds\Bigr)^{1/2}+\frac{1}{\eta_n}\hat h(t_n)\int_1^K||\theta_n(\cdot,s)||^2_{L^2}ds,
\een
hence  from Lemma \ref{p5}
\ben
\int_1^K||\theta_n(\cdot,t)||^2_{H^1}dt\le
C(\sup_{t\in[1,K]}||\theta_n(\cdot,t)||^2_{L^2}+\sup_{t\in[1,K]}||\theta_n(\cdot,t)||_{L^2})
\een
 and the result follows from Part a).
\end{proof}

We study next the solution $\phi$ defined above. Let us consider the eigenvalue problem for \eqref{linear}, namely
$$\Delta\psi +f'(w_0)\psi=\lambda\psi,\quad  \psi\in L^2(\RR^N),\quad \psi\to0\mbox{ as }|x|\to\infty.$$
From $(f1)$, and using the fact that any solution of \eqref{stat0} is radially symmetric and decays exponentially, see for example \cite{bl1, ps1}, this problem has a finite number of eigenvalues in $[\frac{f'(0)}{2},\infty)$ with corresponding finite dimensional eigenspaces, see \cite{bsbook}. We denote by ${\lambda_i}$, $i=1,\ldots,q$, the positive eigenvalues counted with multiplicity.   By $(f2)$, the eigenspace $E_{0}$ corresponding to the (isolated) eigenvalue $0$ is $N$-dimensional, and spanned by $\{\frac{\partial  w_0}{\partial x_i}, i=1,\ldots, N\}$.   Following \cite{cdpe}, for $1/2\le r_0\le1$ as in \eqref{r0} we
decompose
\beq\label{phit0decomp}\phi_0(x)=\phi(x,r_0)=\sum_{i=1}^{q}B_ie^{\lambda_i r_0}\psi_i(x)+\sum_{i=1}^N C_i\frac{\partial w_0}{\partial x_i}(x)+\tilde\theta(x,r_0),
\eeq
with all terms in the right hand side mutually orthogonal in $L^2$, and set
\beq\label{deftildetheta}\tilde\theta(x,t):=\phi(x,t)-\sum_{i=1}^{q}B_ie^{\lambda_i t}\psi_i(x)-\sum_{i=1}^N C_i\frac{\partial w_0}{\partial x_i}(x).
\eeq
Then $\tilde\theta$ satisfies
$$\tilde\theta_t=\Delta\tilde\theta+f'(w_0)\tilde\theta\quad\mbox{in }\RR^n\times(0,\infty).$$
We have
\begin{lemma}\label{T}
 There exist $\alpha>0$ and $T_0>0$ depending only on $f$ and $N$,   such that for any $T\ge T_0$,
\beq\label{cotatildetheta}
\int_T^{2T}||\tilde\theta(\cdot,t)||_{H^1}^2dt\le e^{-\alpha T}.
\eeq
\end{lemma}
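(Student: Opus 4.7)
My plan is to exploit the spectral structure of the self-adjoint operator $L:=\Delta+f'(w_0)$ on $L^2(\mathbb R^N)$, using the fact that $\tilde\theta$ was constructed precisely as the projection of $\phi$ onto the stable part of the spectrum. Because $f'(0)<0$, the essential spectrum of $L$ lies in $(-\infty,f'(0)]$; above it, $L$ has only finitely many eigenvalues in $[f'(0)/2,\infty)$, among which $\lambda_1,\dots,\lambda_q>0$ and $\lambda=0$ (the latter isolated with eigenspace spanned by $\partial_{x_i}w_0$ by hypothesis (f2)). Hence there is a genuine spectral gap: letting
\[
P_+ := \text{orthogonal projection onto } \mathrm{span}\{\psi_1,\dots,\psi_q,\partial_{x_1}w_0,\dots,\partial_{x_N}w_0\}, \qquad P_-:=I-P_+,
\]
the spectrum of $L$ restricted to $\mathrm{Range}(P_-)$ is contained in $(-\infty,-\alpha_0]$ for some $\alpha_0>0$ determined only by $f$ and $N$.

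The definition \eqref{phit0decomp} is exactly the eigenfunction decomposition at time $r_0$, so $\tilde\theta(\cdot,r_0)=P_-\phi_0$. Since $\tilde\theta$ solves the linear, autonomous, self-adjoint equation $\tilde\theta_t=L\tilde\theta$, it is given by $\tilde\theta(\cdot,t)=e^{(t-r_0)L}P_-\phi_0$, and the projector $P_-$ commutes with the semigroup. The spectral theorem then yields
\[
\|\tilde\theta(\cdot,t)\|_{L^2}\le e^{-\alpha_0(t-r_0)}\|P_-\phi_0\|_{L^2}\le e^{-\alpha_0(t-r_0)}\|\phi_0\|_{L^2},
\]
and from \eqref{r0} we have $\|\phi_0\|_{L^2}\le \|\phi_0\|_{H^1}\le 2$ (weak $H^1$ limit of functions with norm at most $2$).

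To upgrade to the $H^1$ bound I would use the standard energy identity: multiplying $\tilde\theta_t=\Delta\tilde\theta+f'(w_0)\tilde\theta$ by $\tilde\theta$ and integrating in space and time over $[T,2T]$ gives
\[
\int_T^{2T}\|\nabla\tilde\theta(\cdot,t)\|_{L^2}^2\,dt\le \tfrac12\|\tilde\theta(\cdot,T)\|_{L^2}^2+\|f'(w_0)\|_\infty\int_T^{2T}\|\tilde\theta(\cdot,t)\|_{L^2}^2\,dt.
\]
Plugging in the $L^2$ decay bound above, the right-hand side is controlled by $C\, e^{-2\alpha_0 T}$ for a constant $C$ depending only on $f$, $N$ and the uniform bound on $\|\phi_0\|_{L^2}$. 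Combining with the $L^2$ estimate,
\[
\int_T^{2T}\|\tilde\theta(\cdot,t)\|_{H^1}^2\,dt\le C'\,e^{-2\alpha_0 T}.
\]

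Finally I would pick any $\alpha\in(0,2\alpha_0)$ and choose $T_0$ so large that $C'e^{-(2\alpha_0-\alpha)T_0}\le 1$, which gives the clean bound \eqref{cotatildetheta} for every $T\ge T_0$. The main subtlety, as I see it, is not the energy estimate but the clean justification of the spectral gap: one must verify that the isolated eigenvalue $0$ is genuinely separated from the rest of the spectrum (the next eigenvalue below $0$, if any, and in any case the top $f'(0)<0$ of the essential spectrum), so that $P_-$ is a bounded orthogonal projector and the spectral theorem applies on its range. This rests on the exponential decay of $w_0$, the self-adjointness of $L$, and the results of \cite{bsbook} cited in the paragraph preceding \eqref{phit0decomp}, so no additional work is needed beyond invoking them.
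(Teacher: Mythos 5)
Your proof is correct and follows the same route the paper takes: the paper itself omits the details, pointing to \cite[Proposition 4.1]{cdpe} and specifying the decay constant $\alpha=-\tilde\lambda/2$ via the variational characterization of the spectral gap, which is precisely the quantity you call $\alpha_0$ (the distance from $0$ to the rest of the spectrum of $L=\Delta+f'(w_0)$ on $\mathrm{Range}(P_-)$). Your exposition — projecting onto the stable spectral subspace, using the uniform $L^2$ bound $\|\phi_0\|_{L^2}\le 2$, and upgrading to $H^1$ via the energy identity — is exactly what the \cite{cdpe} argument amounts to, so there is no substantive difference.
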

\begin{proof}
The proof of \eqref{cotatildetheta} follows exactly the same lines as the one given in the proof of \cite[Proposition 4.1]{cdpe} so we omit it. The only difference here is that $\alpha:=-\tilde\lambda/2$, where
\ben
-\tilde\lambda:&=&\inf\Bigl\{\inte(|\nabla \psi|^2-f'(w_0)\psi^2)dx\ |\ \mbox{$\psi$ is orthogonal (in $L^2$) to $E_{\lambda_i}$,}\nonumber\\
&&\qquad\qquad\mbox{ $i=0,\ldots,q$, and $\inte\psi^2dx=1$}\Bigr\}.
\een

\end{proof}

\begin{lemma}\label{B}
The coefficients $B_i$ in \eqref{phit0decomp} are zero for all $i=1,\ldots,q$, hence
\ben\phi(x,t)=\sum_{i=1}^N C_i\frac{\partial w_0}{\partial x_i}(x)+\tilde\theta(x,t).
\een

\end{lemma}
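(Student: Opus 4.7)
The plan is to argue by contradiction: assume that $B_j\neq 0$ for some $j\in\{1,\ldots,q\}$. I will show that the quadratic form $\langle L_0\phi(\cdot,K),\phi(\cdot,K)\rangle$, with $L_0:=-\Delta-f'(w_0)$, stays uniformly bounded for $K$ in a dense subset of $[1,\infty)$, while the spectral decomposition \eqref{deftildetheta} would force it to tend to $-\infty$ as $K\to\infty$.

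The first ingredient is a second-order expansion of the Lyapunov functional $J$ near the translate $w(\cdot+y_n)$. Setting $v_n(\cdot,s):=u(\cdot,t_n+s)-w(\cdot+y_n)=\eta_n\phi_n(\cdot,s)$, integrating by parts using $-\Delta w(\cdot+y_n)=f(w(\cdot+y_n))$, and exploiting translation invariance $J(w(\cdot+y_n))=J(w)$, one obtains
\[J(u)(t_n+s)-J(w)=\frac{\eta_n^2}{2}\langle L_n\phi_n(\cdot,s),\phi_n(\cdot,s)\rangle+E_n(s),\qquad L_n:=-\Delta-f'(w(\cdot+y_n)),\]
with $|E_n(s)|\le C\,\omega_f(\|v_n(\cdot,s)\|_\infty)\,\eta_n^2\,\|\phi_n(\cdot,s)\|_{L^2}^2$, $\omega_f$ being the modulus of continuity of $f'$ on the range of $u$. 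By \eqref{c7} and $w(\cdot+y_n)\to w_0$ in $L^\infty$, $\omega_f(\|v_n(\cdot,s)\|_\infty)\to 0$ uniformly on $[0,K]$. Evaluating at $s=r_n$ with $\|\phi_n(\cdot,r_n)\|_{H^1}\le 2$ yields $|J(u)(t_n+r_n)-J(w)|=O(\eta_n^2)$; since $J(u)(t)-\frac{1}{4}\int_0^t\hat h^2$ is non-increasing by Lemma \ref{ln}, the limit $J_\infty:=\lim_{t\to\infty}J(u)(t)$ exists and, by letting $n\to\infty$ along $t_n+r_n$, equals $J(w)$. Combining Lemma \ref{ln} in the forward and backward directions with the hypothesis $\int_{t_n}^\infty\hat h^2\le\eta_n^2$ from \eqref{caso2} then gives $|J(u)(t_n+K)-J(w)|\le C_1\eta_n^2$ for every $K\ge r_n$.

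Dividing by $\eta_n^2/2$ and using Lemma \ref{p5} (or a direct Gronwall argument on the $\phi_n$-equation starting at $r_n$) to bound $\|\phi_n(\cdot,K)\|_{L^2}^2$ uniformly in $n$ for each fixed $K$, the remainder becomes $o(1)$ as $n\to\infty$, so that $|\langle L_n\phi_n(\cdot,K),\phi_n(\cdot,K)\rangle|\le 2C_1+o(1)$. By Lemma \ref{ln2}(b), along a further subsequence $\phi_n(\cdot,K)\to\phi(\cdot,K)$ strongly in $H^1$ for a.e.\ $K\in[1,\infty)$; together with $f'(w(\cdot+y_n))\to f'(w_0)$ in $L^\infty$ this yields $|\langle L_0\phi(\cdot,K),\phi(\cdot,K)\rangle|\le 2C_1$ at such $K$. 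On the other hand, using the mutual orthogonality of the three summands in \eqref{deftildetheta} in both $L^2$ and the $L_0$-form, since $L_0\psi_i=-\lambda_i\psi_i$, $L_0(\partial_{x_i}w_0)=0$, and $\tilde\theta$ lies in the complement of all eigenspaces,
\[\langle L_0\phi(\cdot,K),\phi(\cdot,K)\rangle=-\sum_{i=1}^{q}\lambda_i B_i^2\, e^{2\lambda_i K}\,\|\psi_i\|_{L^2}^2+\langle L_0\tilde\theta(\cdot,K),\tilde\theta(\cdot,K)\rangle,\]
the last term being bounded (in fact exponentially small in $K$) by Lemma \ref{T}. If some $B_j\neq 0$, the right-hand side tends to $-\infty$ as $K\to\infty$, contradicting the uniform upper bound; hence $B_i=0$ for all $i=1,\ldots,q$.

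The main obstacle lies in the control of the Taylor remainder: since only $f\in C^1$ is assumed, the error is governed by the modulus of continuity $\omega_f$ rather than by a clean $O(\|v\|_{L^3}^3)$ term, so one has to combine the uniform $L^\infty$-decay from \eqref{c7} with the Gronwall-type $L^2$-bound on $\phi_n(\cdot,K)$ at fixed $K$ to ensure that the remainder is negligible compared to $\eta_n^2$ before passing to the limit in $n$.
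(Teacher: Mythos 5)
Your proof is correct and reaches the same contradiction as the paper, but the bookkeeping is organized quite differently, and it is worth recording what changes. The paper fixes a large time horizon $t_0$, \emph{integrates} $J(u)(t_n+t)-J(w_0)$ over $t\in[1,t_0]$, bounds the integral from below via Lemma \ref{ln} and Fatou (this is \eqref{n10}), and from above via the Taylor expansion and the explicit computation of $\int_1^{t_0}\langle J''(w_0)\phi,\phi\rangle\,dt$, which produces the growing term $-CB_{i_0}^2e^{2\lambda_{i_0}t_0}$; the contradiction then comes from letting $t_0\to\infty$ while holding $n=n(t_0)$ large. You instead work \emph{pointwise} in time: you first pin down $J(u)(t_n+r_n)=J(w)+O(\eta_n^2)$ using the a priori $H^1$-bound $\|\phi_n(\cdot,r_n)\|_{H^1}\le 2$ and the Taylor remainder, then propagate this to $|J(u)(t_n+K)-J(w)|\le C_1\eta_n^2$ for \emph{all} $K\ge r_n$ by applying Lemma \ref{ln} forward from $r_n$ and backward from $\infty$ together with $\int_{t_n}^\infty\hat h^2\le\eta_n^2$, and finally pass to the limit $n\to\infty$ at fixed $K$ (via Lemma \ref{ln2} to upgrade to strong $H^1$ convergence a.e.) to get a $K$-uniform bound on $\langle L_0\phi(\cdot,K),\phi(\cdot,K)\rangle$, which the spectral decomposition violates as $K\to\infty$ if any $B_i\neq 0$. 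What this buys you is a cleaner separation of the two limits ($n\to\infty$ first, then $K\to\infty$) and it avoids having to track the $t_0$-dependence of the error terms as the paper must (its $n_0(t_0)$, $n_1(t_0)$); the price is that you must handle the a.e.\ strong $H^1$ convergence issue and establish $J_\infty=J(w)$ explicitly, whereas the paper sidesteps both by integrating and invoking only lower semicontinuity of $J$. Both arguments lean on the same three ingredients — the one-sided monotonicity of $J$ from Lemma \ref{ln}, the second-order Taylor expansion of $J$ with a $C^1$-modulus remainder, and the spectral decomposition together with $\eta_n^2\ge\int_{t_n}^\infty\hat h^2$ — so this is the same mechanism, differently routed.
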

\begin{proof}

From  Lemma \ref{ln}, for any $0<\tau<t_n$,
\ben
J(u)(t_n+t)-J(u)(\tau+t)\le\frac{1}{4}\int^{\infty}_{\tau+t}\hat h^2(s)ds,
\een
hence, integrating over $[1,t_0]$ and letting  $n\to\infty$, by \eqref{c77} and \eqref{c7}, we get
\beq\label{n10}\int_1^{t_0}(J(u)(\tau+t)-J(w_0))dt\ge- \frac{1}{4}\int_1^{t_0}\int_{\tau+t}^\infty\hat h^2(s)dsdt.\eeq

We denote by $\langle\cdot,\cdot\rangle$ the usual inner product in $H^1(\mathbb R^N)$. As $J'(w_0)=0$, by using the Taylor's expansion of $J$ around $w_0$ we obtain
\be\label{519}
\int_1^{t_0}(J(u)(t_n+t)-J(w_0))dt=\frac{\eta_n^2}{2}\int_1^{t_0}\langle J''(w_0)\phi_n,\phi_n\rangle\ dt\nonumber\\
+\eta_n^2\int_1^{t_0}\int_0^1(1-\mu)\langle (J''(w_0+\mu(u_n-w_0))-J''(w_0))\phi_n,\phi_n\rangle d\mu\ dt.
\ee
As $J''$ is continuous and $u_n$ converges uniformly, we get from Lemma \ref{p5} that
$$\int_1^{t_0}\int_0^1(1-\mu)\langle (J''(w_0+\mu(u_n-w_0))-J''(w_0))\phi_n,\phi_n\rangle d\mu\ dt\le 1/2$$
for $n\ge n_0(t_0)$.
As $\theta_n=\phi_n-\phi$, we can write
$$\int_1^{t_0}\langle J''(w_0)\phi_n,\phi_n\rangle\ dt=\int_1^{t_0}(\langle J''(w_0)\phi,\phi\rangle+2\langle J''(w_0)\phi,\theta_n\rangle+\langle J''(w_0)\theta_n,\theta_n\rangle)dt.
$$
From $b)$ in Lemma \ref{ln2}, for $n\ge n_1(t_0)\ge n_0(t_0)$,
\be\label{nn}
\int_1^{t_0}\langle J''(w_0)\theta_n,\theta_n\rangle dt=\int_1^{t_0}\int_{\mathbb R^N}(|\nabla \theta_n|^2-f'(w_0)\theta_n^2)dxdt\nonumber\\
\le C\int_1^{t_0}||\theta_n||^2_{H^1}dt\le 1
\ee
 and
\be\label{0n}
\int_1^{t_0}\langle J''(w_0)\phi,\theta_n\rangle dt=\int_1^{t_0}\int_{\mathbb R^N}(\nabla\phi\cdot\nabla\theta_n-f'(w_0)\phi\theta_n) dx dt\nonumber\\
\le \Bigl(\int_1^{t_0}||\phi||^2_{H^1}dt\Bigr)^{1/2}\Bigl(\int_1^{t_0}||\theta_n||^2_{H^1}dt\Bigr)^{1/2}
\le 1.
\ee
Finally, using the decomposition given in \eqref{deftildetheta}, namely
$$\phi(x,t)=\sum_{i=1}^{q}B_ie^{\lambda_i t}\psi_i(x)+\sum_{i=1}^N C_i\frac{\partial w_0}{\partial x_i}(x)+\tilde\theta(x,t),
$$
we have
\ben
\int_1^{t_0}\langle J''(w_0)\phi,\phi\rangle dt=
\sum_{i,j=1}^{q}\langle J''(w_0)\psi_i,\psi_j\rangle\int_1^{t_0}B_iB_je^{(\lambda_i+\lambda_j)t}dt+\qquad\qquad\\
2\sum_{i=1}^{q}\int_1^{t_0}\!\!\!B_ie^{\lambda_i t}\langle J''(w_0)\psi_i,\sum_{i=1}^NC_i\frac{\partial w_0}{\partial x_i}+\tilde\theta\rangle dt
+\!\!\!\int_1^{t_0}\!\!\!\langle J''(w_0)(\sum_{i=1}^NC_i\frac{\partial w_0}{\partial x_i}+\tilde\theta),\sum_{i=1}^NC_i\frac{\partial w_0}{\partial x_i}+\tilde\theta\rangle dt
\een
\ben
=-\sum_{i=1}^{q}\!B_i^2\inte\!\!\!\psi_i^2dx\frac{e^{2\lambda_it_0}-e^{2\lambda_i}}{2}+\!\!\!\sum_{i,j=1,i\not=j}^{q}\!\!\!\underbrace{\langle J''(w_0)\psi_i,\psi_j\rangle}_{=0} B_iB_j\frac{e^{(\lambda_i+\lambda_j)t_0}-e^{(\lambda_i+\lambda_j)}}{\lambda_i+\lambda_j}\\
+2\sum_{i=1}^{q}\!\!\int_1^{t_0}\!\!\!B_ie^{\lambda_i t}\langle J''(w_0)\psi_i,\sum_{i=1}^NC_i\frac{\partial w_0}{\partial x_i}+\tilde\theta\rangle dt
+\!\!\!\int_1^{t_0}\!\!\!\langle J''(w_0)(\sum_{i=1}^NC_i\frac{\partial w_0}{\partial x_i}\!+\!\tilde\theta),\sum_{i=1}^NC_i\frac{\partial w_0}{\partial x_i}\!+\!\tilde\theta\rangle dt.
\een
Let $i_0$ be such that $\lambda_{i_0}=\max\{\lambda_i\ |\ B_i\not=0\}$. Then, from lemma \ref{T}, we can write
\beq\label{00}\int_1^{t_0}\langle J''(w_0)\phi,\phi\rangle dt\le -CB_{i_0}^2\frac{e^{2\lambda_{i_0}t_0}-e^{2\lambda_{i_0}}}{2}+Ce^{\lambda_{i_0}t_0}
\eeq
for some positive constant $C$ independent of $t_0$.
Hence for $n$ large enough,  we get from \eqref{n10}, \eqref{519}, \eqref{nn}, \eqref{0n}, and \eqref{00},
\ben
- \frac{1}{4}\int_1^{t_0}\int_{t_n+t}^\infty\hat h^2(s)dsdt\le \int_1^{t_0}(J(u)(t_n+t)-J(w_0))dt\qquad\qquad\\
\le
\frac{\eta_n^2}{2}\Bigl[-CB_{i_0}^2\frac{e^{2\lambda_{i_0} t_0}-e^{2\lambda_{i_0}}}{2}+Ce^{\lambda_{i_0}t_0}+3\Bigr]
\een
and thus
$$
- \frac{t_0}{4}\int_{t_n}^\infty\hat h^2(s)ds\le\frac{\eta_n^2}{2}\Bigl[-CB_{i_0}^2\frac{e^{2\lambda_{i_0} t_0}-e^{2\lambda_{i_0}}}{2}+Ce^{\lambda_{i_0}t_0}+3\Bigr].
$$
Recalling assumption
$$\eta^2(y_n,t_n)\ge \int_{t_n}^\infty \hat h^2(s)ds$$
in \eqref{caso2},
we have a contradiction if $t_0$ is large enough. Hence $B_{i}=0$ for all $i$.

\end{proof}
We are now in a position to conclude the proof of Claim 1'. Let $\vec C=(C_1,C_2,...,C_N)$, where the $C_i's$ are as defined in \eqref{phit0decomp}, $D=|\vec C|$,   $z_n=\eta_n\vec C$, $ T_0$ as given in Lemma \ref{T}, and $T\ge T_0$  such that $e^{-\alpha T}\le 1/8$ . Then by  Lemma \ref{B}
\ben
u(x,t+t_n)-w(x+y_n+z_n)=\eta_n(\theta_n+\phi)+w(x+y_n)-w(x+y_n+z_n)\\
=\eta_n\theta_n(x,t)+\eta_n\sum_{i=1}^NC_i\frac{\partial w_0}{\partial x_i}+\eta_n\tilde\theta(x,t)+
w(x+y_n)-w(x+y_n+z_n)\\
=\eta_n\theta_n(x,t)+\eta_n\tilde\theta(x,t)
-(w(x+y_n+z_n)-w(x+y_n)-\nabla w_0(x)\cdot z_n),
\een
hence by Proposition \ref{teo23},
\ben
\eta^2(y_n+z_n,t_n+T)&=&\int_T^{2T}||u(x\cdot,t+t_n)-w(\cdot+y_n+z_n)||_{H^1}^2dt\\
&\le&
\eta_n^2(\int_T^{2T}||\theta_n(\cdot,t)||_{H^1}^2dt+\int_T^{2T}||\tilde\theta(x,t)||_{H^1}^2dt+TO(\eta_n)).
\een
We now fix  $n_0=n_0(T)$ so that the first and third term in the parenthesis add to something less than  $1/8$ for $n\ge n_0$. Then from Lemma \ref{T} we have
\ben
\eta^2(y_n+z_n,t_n+T)&=&\int_T^{2T}||u(x\cdot,t+t_n)-w(\cdot+y_n+z_n)||_{H^1}^2dt
\le \frac{1}{4}\eta_n^2
\een
proving \eqref{p3} in Claim 1' and thus Claim 1 follows.

\end{proof}

\begin{proof}[Proof of Claim 2.]
From our assumption, there exists $t_0\in[t,t+T]$ such that
$$||u(\cdot,t_0)-w(\cdot+y)||^2_{H^1}\le \frac{1}{T}\int_t^\infty \hat h^2(s)ds.$$
From lemma \ref{lemaprevio}(ii),  with $t_1=t_0$ and $t_2=t+2T$, we find that
\ben
\int_{t+T}^{t+2T}||u(\cdot,s)-w(\cdot+y)||^2_{H^1}ds&\le& \int_{t_0}^{t+2T}||u(\cdot,s)-w(\cdot+y)||^2_{H^1}ds\\
&\le&
C_1(\bar M)e^{4\bar MT}\Bigl(\frac{1}{T}+1\Bigr)\int_t^\infty \hat h^2(\xi)d\xi,
\een
hence the result follows with $A^2=2C_1(\bar M)e^{4\bar MT}$.

\end{proof}

The following result will follow by induction from Lemma \ref{paso1}.

\begin{lemma}\label{lemapili}
There exist $D>0$, $T>1$, $\bar t>0$, $\bar\eta>0$ and $A>0$ such that for any $t^*>\bar t$,   with $\eta(0,t^*)\le \bar\eta$, there exist   $\{x_i\}\subset\mathbb R^N$, with $|x_i|\le D\eta(x_1+\cdots+x_{i-1},t^*+(i-1)T)$,    it holds that
\beq\label{p0}
\eta(x_1+\cdots+x_{k},t^*+kT)\le \frac{1}{2}\eta(x_1+\cdots+x_{k-1},t^*+(k-1)T)+A\Bigl(\int_{t^*+(k-1)T}^\infty\hat h^2(s)ds\Bigr)^{1/2}.\eeq

\end{lemma}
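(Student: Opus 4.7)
My approach is a straightforward induction on $k$ using Lemma \ref{paso1} (the Main Lemma). The constants $D$, $T$, $A$ appearing in the statement of the present lemma will be the same ones provided by the Main Lemma; only $\bar t$ and $\bar\eta$ need to be chosen with some care, to guarantee that the hypotheses $|y|\le 1$, $t\ge t_0$, $\eta(y,t)\le \eta_0$ of the Main Lemma are preserved at every step of the induction. The estimate \eqref{p0} itself is nothing more than the square-root form of the Main Lemma's conclusion, using $\sqrt{a^2+b^2}\le a+b$ for $a,b\ge 0$.

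Write $\eta_i:=\eta(x_1+\cdots+x_{i-1},\,t^*+(i-1)T)$ and $I_i:=\bigl(\int_{t^*+(i-1)T}^\infty \hat h^2(s)\,ds\bigr)^{1/2}$, so the recurrence to be maintained is $\eta_{i+1}\le \tfrac12 \eta_i+A I_i$. Unfolding this gives $\eta_i\le 2^{-(i-1)}\eta_1+A\sum_{j=1}^{i-1}2^{-(i-1-j)}I_j$. Since $\{I_j\}$ is nonincreasing, we obtain the uniform bound $\eta_i\le \eta_1 + 2A I_1$ and, summing with Fubini, $\sum_{i\ge 1}\eta_i\le 2\eta_1 + 2A\sum_{j\ge 1}I_j$.

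The crucial point is that $I_1=g(t^*)\to 0$ by $(h3)$ and $\sum_{j\ge 1}I_j$ can be made arbitrarily small by enlarging $t^*$. Indeed, the function $g(t):=\bigl(\int_t^\infty \hat h^2(s)\,ds\bigr)^{1/2}$ is nonincreasing, so $Tg(t^*+jT)\le \int_{t^*+(j-1)T}^{t^*+jT}g(t)\,dt$ for $j\ge 1$, and summation yields $\sum_{j\ge 1}I_j\le g(t^*)+\tfrac{1}{T}\int_{t^*}^\infty g(t)\,dt$, which tends to zero as $t^*\to\infty$ by $(h3)$ and $(h5)$. I therefore first fix $\bar t\ge t_0$ and then $\bar\eta\in(0,\eta_0]$ such that, for every $t^*\ge\bar t$, both $\bar\eta+2A I_1\le \eta_0$ and $D\bigl(2\bar\eta+2A\sum_{j\ge 1}I_j\bigr)\le 1$ hold simultaneously; this is possible precisely because $g(\bar t)$ and $\int_{\bar t}^\infty g(t)\,dt$ can be made as small as we wish.

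The induction then runs as follows. Assuming $x_1,\dots,x_{k-1}$ have already been constructed with the required properties at all lower levels, the bounds above give $|x_1+\cdots+x_{k-1}|\le D\sum_{i=1}^{k-1}\eta_i\le 1$ and $\eta_k\le\eta_0$, so Lemma \ref{paso1} applies at $(y,t)=(x_1+\cdots+x_{k-1},\,t^*+(k-1)T)$, producing $x_k$ with $|x_k|\le D\eta_k$ and a squared estimate whose square root is precisely \eqref{p0}. The only real obstacle is the bookkeeping needed to keep the cumulative translation bounded by $1$ uniformly in $k$; once the summability of $\{I_j\}$ is extracted from $(h5)$, everything else is mechanical.
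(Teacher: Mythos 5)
Your proof is correct and follows essentially the same route as the paper's: you induct on $k$ using the Main Lemma, choosing $\bar t$ and $\bar\eta$ so that the hypotheses $|y|\le 1$, $t\ge t_0$, $\eta\le\eta_0$ are preserved, and you control both $\sum_i|x_i|$ and $\eta_k$ by unfolding the recurrence and using the summability of $I_j$ drawn from $(h5)$. The only cosmetic differences are bookkeeping ones: the paper maintains the pointwise invariant $\eta_k<\bar\eta$ via $\bar\eta\le\frac12\bar\eta+AI_1$, while you instead derive the uniform bound $\eta_k\le\eta_1+2AI_1\le\eta_0$ directly from the unfolded recurrence; and your bound $\sum_jI_j\le g(t^*)+\frac1T\int_{t^*}^\infty g$ is a minor variant of the paper's $\sum_jI_j\le\int_{t^*-T}^\infty g$ (both use $T>1$ and monotonicity of $g$), with identical effect.
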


\begin{proof}
Let $T$, $t_0$, $\eta_0$, $A$ and $D$ be as in  Lemma \ref{paso1}. Let $\bar\eta$ and $\bar t$ be such that
\beq\label{bareta}
\bar \eta\in(0,\eta_0),\quad \bar\eta\le \frac{1}{4(D+1)}
\eeq
and
\beq\label{bart}
\bar t\ge t_0,\quad A^2\int_{\bar t}^\infty \hat h^2(s)ds<\bar\eta^2/4,\quad 2DA\int_{\bar t-T}^\infty\Bigl(\int_t^\infty\hat h^2(s)ds\Bigr)^{1/2}\ dt<1/2.
\eeq
Let $t^*\ge \bar t$. We will prove the existence of the sequence $\{x_i\}$ by induction by proving that for each $k\in\mathbb N$, there exists $x_k$ satisfying:
\beq
\begin{cases}
\mbox{\eqref{p0}},\\
|x_k|\le D\eta(x_1+\cdots+x_{k-1},t^*+(k-1)T),\\
|x_1+x_2+\cdots+x_k|\le 1\quad\mbox{and}\\  \eta(x_1+x_2+\cdots+x_k,t^*+kT)<\bar\eta.
\end{cases}\label{indass}
\eeq
For $k=1$, \eqref{indass} is true thanks to Lemma \ref{paso1} and the choice of the parameters. Assume now that \eqref{indass} is true for all positive integers less than or equal to $k$. Then, we can apply Lemma \eqref{paso1} to obtain the existence of $x_{k+1}$ such that
\eqref{p0} holds for $k+1$ and
$$|x_{k+1}|\le D\eta(x_1+\cdots+x_{k},t^*+kT).$$
By assumption we have that
$$ \sum_{i=1}^{k+1}|x_i|\le D\sum_{i=1}^{k+1}\eta(x_1+x_2+\cdots+x_{i-1},t^*+(i-1)T).$$
Using repeatedly the induction hypothesis, we get that for $i\ge 2$,
$$
\eta(x_1+x_2+\cdots+x_{i-1},t^*+(i-1)T)\le \frac{1}{2^{i-1}}\eta(0,t^*)+A\sum_{j=1}^{i-1}\frac{1}{2^{i-1-j}}\Bigl(\int_{t^*+(j-1)T}^\infty\hat h^2(s)ds\Bigr)^{1/2},$$
hence by adding we obtain
\be\label{c11}
\sum_{i=1}^{k+1}\eta(x_1+x_2+\cdots+x_{i-1},t^*+(i-1)T)\!\!\!&\le&\!\!\! 2\eta(0,t^*)+2A\sum_{j=1}^{k}\Bigl(\int_{t^*+(j-1)T}^\infty\hat h^2(s)ds\Bigr)^{1/2}\qquad\nonumber\\
\!\!\!&\le&\!\!\! 2\eta(0,t^*)+2A\int_{t^*-T}^\infty\Bigl(\int_{t}^\infty\hat h^2(s)ds\Bigr)^{1/2}\!\!\!dt,
\ee
and thus, by the choice of $\bar \eta$ and $\bar t$ in \eqref{bareta} and \eqref{bart}, we obtain
$$|x_1+x_2+\cdots+x_k+x_{k+1}|\le 1.$$
On the other hand, using \eqref{p0} for $k+1$, \eqref{indass}, and \eqref{bart}, we obtain
$$\eta(x_1+x_2+\cdots+x_{k+1},t^*+(k+1)T)\le \frac{1}{2}\bar\eta+A\Bigl(\int_{t^*}^\infty\hat h^2(s)ds\Bigr)^{1/2}\le \frac{1}{2}\bar\eta+A\Bigl(\int_{\bar t}^\infty\hat h^2(s)ds\Bigr)^{1/2}\le \bar\eta,$$
thus   Lemma \ref{lemapili} follows.
\end{proof}

We are now ready to prove the Key Lemma.

\begin{proof}[Proof of the Key Lemma]
Observe that in terms of $\eta$, inequality \eqref{kl} in the statement of this lemma reads as
 $$\eta(0,t^*+kT)\le \bar C\eta(0,t^*)+\bar C\int_{t^*-T}^\infty\Bigl(\int_{t}^\infty\hat h^2(s)ds\Bigr)^{1/2}dt.$$
 Let  $D$, $A$, $T$, $\bar t$, $\bar\eta$ and $\{x_i\}$ as  in  Lemma \ref{lemapili}. We have, for some positive constant $C_w$ depending only on $w$,
\be\label{p1}
|\eta(x_1+x_2+\cdots+x_k,t^*+kT)\!\!&\!\!-\!\!&\!\!\eta(0,t^*+kT)|\le C_w\sum_{i=1}^k|x_i|\nonumber\\
&\le& C_wD\sum_{i=1}^k\eta(x_1+x_2+\cdots+x_{i-1},t^*+(i-1)T)\nonumber
\ee
thus, from \eqref{c11},
\ben
\eta(0,t^*+kT)&\le& (1+C_wD)\sum_{i=1}^{k+1}\eta(x_1+x_2+\cdots+x_{i-1},t^*+(i-1)T)\\
&\le&(1+C_wD)(2\eta(0,t^*)+2A\int_{t^*-T}^\infty\Bigl(\int_{t}^\infty\hat h^2(s)ds\Bigr)^{1/2}dt),
\een
and thus the Key Lemma follows with
 $\bar C=2(1+C_wD)(1+A)$.
\end{proof}

\section{Appendix: Some technical results}\label{final}

We state and prove here some technical results that have been used in the previous sections.
In what follows, $\tilde w$ is any nonnegative solution of \eqref{stat0}. We start with the following lemma.
\begin{lemma}\label{lemaprevio}
If $t_2>t_1$, then
\begin{enumerate}
\item[(i)]
$$||u(\cdot,t_2)-\tilde w||_{L^2}\le e^{\bar M(t_2-t_1)}\Bigl(||u(\cdot,t_1)-\tilde w||_{L^2}+\int_{t_1}^{t_2}\hat h(s)ds\Bigr).$$
\item[(ii)]
\ben
\int_{t_1}^{t_2}||u(\cdot,s)-\tilde w||^2_{H^1}ds&\le& \frac{1}{2}||u(\cdot,t_1)-\tilde w||^2_{L^2}+\bar M\int_{t_1}^{t_2}||u(\cdot,s)-\tilde w||^2_{L^2}ds\\
&&+\Bigl(\int_{t_1}^{t_2}\hat h^2(s)ds\Bigr)^{1/2}(\Bigl(\int_{t_1}^{t_2}||u(\cdot,s)-\tilde w||^2_{L^2}ds\Bigr)^{1/2}\\
&\le& C_1(\bar M)e^{2\bar M(t_2-t_1)}\Bigl(||u(\cdot,t_1)-\tilde w||^2_{L^2}+ \int_{t_1}^{t_2}\hat h^2(s)ds\Bigr)
\een
where $\bar M=\max_{u\in[0,M]}|f'(u)|+1$, and $M$ is a bound for $u$.
\end{enumerate}
\end{lemma}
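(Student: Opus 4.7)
Write $v := u - \tilde w$. Subtracting the stationary equation $\Delta \tilde w + f(\tilde w) = 0$ from \eqref{eq1}, and invoking the mean value theorem to write $f(u) - f(\tilde w) = f'(\xi)\,v$ with $\xi$ lying between $u$ and $\tilde w$ (so $|f'(\xi)| \le \bar M - 1$ on the relevant range), one obtains
\[
v_t = \Delta v + f'(\xi)\,v + h(x,t).
\]
The standard $L^2$ energy estimate---multiply by $v$, integrate over $\mathbb R^N$, integrate by parts to produce $-\|\nabla v\|_{L^2}^2$, and apply (h2) together with Cauchy--Schwarz in $x$ to the $h\,v$ term---yields the master inequality
\[
\tfrac{1}{2}\tfrac{d}{dt}\|v\|_{L^2}^2 + \|\nabla v\|_{L^2}^2 \;\le\; (\bar M - 1)\|v\|_{L^2}^2 + \hat h(t)\,\|v\|_{L^2}. \quad (\star)
\]

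For part (i), I would drop the nonnegative gradient term in $(\star)$, rewrite the left-hand side as $\|v\|_{L^2}\,\tfrac{d}{dt}\|v\|_{L^2}$ (justified by regularizing with $(\|v\|_{L^2}^2 + \varepsilon)^{1/2}$ and letting $\varepsilon \downarrow 0$), divide through by $\|v\|_{L^2}$, and apply the integral form of Gronwall's inequality on $[t_1,t_2]$. The resulting factor $e^{(\bar M - 1)(t_2 - t_1)} \le e^{\bar M(t_2-t_1)}$ delivers exactly the stated bound.

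For part (ii), integrate $(\star)$ over $[t_1,t_2]$, discard the nonnegative $\tfrac{1}{2}\|v(t_2)\|_{L^2}^2$, and add $\int_{t_1}^{t_2}\|v\|_{L^2}^2\,ds$ to both sides so that the left-hand side becomes $\int_{t_1}^{t_2}\|v\|_{H^1}^2\,ds$ with coefficient $\bar M$ on the remaining $\|v\|_{L^2}^2$ integral; applying Cauchy--Schwarz in $s$ to $\int_{t_1}^{t_2}\hat h\,\|v\|_{L^2}\,ds$ produces the first stated inequality directly. For the second inequality, substitute the pointwise bound from part (i) into $\int_{t_1}^{t_2}\|v(s)\|_{L^2}^2\,ds$, using $(a+b)^2 \le 2a^2 + 2b^2$ and $\bigl(\int_{t_1}^{s}\hat h\bigr)^2 \le (s-t_1)\int_{t_1}^{s}\hat h^2$ to separate the $\|v(t_1)\|_{L^2}^2$ and $\int \hat h^2$ contributions; the polynomial prefactors in $(t_2 - t_1)$ that appear are absorbed into $e^{2\bar M(t_2-t_1)}$ via the elementary estimate $x^k \le C_k\,e^{2\bar M x}$ valid for $x \ge 0$, with $C_k$ depending only on $k$ and $\bar M$.

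The only subtle point is making the square-root manipulation in (i) rigorous at times where $\|v\|_{L^2}$ vanishes, which is handled by the $\varepsilon$-regularization above; everything else is routine bookkeeping of constants.
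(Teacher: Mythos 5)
Your proof is correct and follows essentially the same route as the paper: the same $L^2$ energy estimate (the paper's version already has $\|v\|_{H^1}^2$ on the left because the extra $\|v\|_{L^2}^2$ is absorbed into the definition $\bar M = \max|f'|+1$), the same Gronwall argument for (i), and the same integrate-plus-Cauchy--Schwarz for (ii). Your explicit remark that the polynomial prefactor in $t_2-t_1$ is absorbed using the slack between $\bar M-1$ (the actual bound on $|f'|$) and $\bar M$ is the detail the paper leaves implicit in its one-line ``using H\"older and part (i)''; just note that the relevant elementary bound is $x^k \le C_k e^{2x}$ (so that $x^k e^{2(\bar M-1)x}\le C_k e^{2\bar M x}$), rather than $x^k\le C_k e^{2\bar M x}$, which by itself would not suffice if the exponential from (i) already carried the full rate $\bar M$.
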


\begin{proof}
Let $M$ and $\bar M$ as above. Using the equations satisfied by $u$ and $\tilde w$,  we obtain
$$(u-\tilde w)_t=\Delta(u-\tilde w)+f(u)-f(\tilde w)+h(x,t).$$
Multiplying by $u-\tilde w$ and integrating over $\mathbb R^N$ we get, for some  $\bar u\in[0,M]$,
\ben
\frac{1}{2}\Bigl(||u(\cdot,t)-\tilde w||^2_{L^2}\Bigr)_t&=&-\int|\nabla (u-\tilde w)|^2dx+\int f'(\bar u)(u-\tilde w)^2dx+\int h(x,t)(u-\tilde w)dx\\
&\le& -||u-\tilde w||^2_{H^1}+\bar M||u-\tilde w||^2_{L^2}+\hat h(t)||u-\tilde w||_{L^2},
\een
hence
\beq\label{2}\frac{1}{2}\Bigl(||u(\cdot,t)-\tilde w||^2_{L^2}\Bigr)_t+||u-\tilde w||^2_{H^1}\le
\bar M||u-\tilde w||^2_{L^2}+\hat h(t)||u-\tilde w||_{L^2},
\eeq
hence dividing by $||u-\tilde w||_{L^2}$ we get
$$\bigl(||u(\cdot,t)-\tilde w||_{L^2}\bigr)_t\le \bar M||u-\tilde w||_{L^2}+\hat h(t),$$
and thus multiplying by $e^{-\bar Mt}$ and integrating over $[t_1,t_2]$ $(i)$ follows. Finally, integrating \eqref{2} over $[t_1,t_2]$ and using H\"older and part $(i)$, $(ii)$ follows.
\end{proof}

\begin{proposition}\label{teo21}
Let $K>1$. Then there exists $C=C(K)$ such that for any $t>0$ and any $\tau\in[0,K]$,
\ben
\int_{0}^{K}||u(\cdot,s+\tau+t)-\tilde w||^2_{H^1}ds\le C\int_{0}^{K}||u(\cdot,s+t)-\tilde w||^2_{H^1}ds+C\int_{t}^{t+2K}\hat h^2(s)ds.
\een
\end{proposition}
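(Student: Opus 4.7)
The plan is to rewrite both sides as integrals of $\|u(\cdot,s)-\tilde w\|_{H^1}^2$ over time intervals of length $K$ and then invoke Lemma~\ref{lemaprevio}(ii) on the portion of the target interval that sticks out beyond the source interval. Concretely, the left-hand side equals $\int_{t+\tau}^{t+\tau+K}\|u(\cdot,s)-\tilde w\|_{H^1}^2\,ds$. If $\tau=0$ the inequality is trivial with $C=1$, so assume $\tau>0$. Since $\tau\le K$, we split the target interval as
\[
[t+\tau,t+\tau+K]=[t+\tau,t+K]\cup[t+K,t+\tau+K],
\]
and the piece $[t+\tau,t+K]$ is contained in the source interval $[t,t+K]$, so its contribution is bounded by $\int_{0}^{K}\|u(\cdot,s+t)-\tilde w\|_{H^1}^2\,ds$ with constant $1$.

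The work is in handling the overshoot $[t+K,t+\tau+K]$, whose length is $\tau\le K$. By the mean value property applied to $\|u(\cdot,s)-\tilde w\|_{L^2}^2\le\|u(\cdot,s)-\tilde w\|_{H^1}^2$, there exists $s^*\in[t,t+K]$ with
\[
\|u(\cdot,s^*)-\tilde w\|_{L^2}^2\le\frac{1}{K}\int_{t}^{t+K}\|u(\cdot,s)-\tilde w\|_{H^1}^2\,ds.
\]
Now apply Lemma~\ref{lemaprevio}(ii) with $t_1=s^*$ and $t_2=t+\tau+K$; since $s^*\le t+K$ we have $t+\tau+K-s^*\le 2K$, so
\[
\int_{s^*}^{t+\tau+K}\!\!\|u(\cdot,s)-\tilde w\|_{H^1}^2\,ds\le C_1(\bar M)e^{4\bar MK}\Bigl(\|u(\cdot,s^*)-\tilde w\|_{L^2}^2+\int_{s^*}^{t+\tau+K}\!\!\hat h^2(s)\,ds\Bigr).
\]
The left-hand integral dominates $\int_{t+K}^{t+\tau+K}\|u(\cdot,s)-\tilde w\|_{H^1}^2\,ds$ since $s^*\le t+K$, and on the right $\int_{s^*}^{t+\tau+K}\hat h^2\le\int_{t}^{t+2K}\hat h^2$.

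Combining the two pieces and using the bound for $\|u(\cdot,s^*)-\tilde w\|_{L^2}^2$ gives
\[
\int_{0}^{K}\|u(\cdot,s+\tau+t)-\tilde w\|_{H^1}^2\,ds\le\Bigl(1+\tfrac{C_1(\bar M)e^{4\bar MK}}{K}\Bigr)\!\!\int_{0}^{K}\!\!\|u(\cdot,s+t)-\tilde w\|_{H^1}^2\,ds+C_1(\bar M)e^{4\bar MK}\!\!\int_{t}^{t+2K}\!\!\hat h^2(s)\,ds,
\]
which is the desired estimate with $C=C(K)=\max\{1+C_1(\bar M)e^{4\bar MK}/K,\,C_1(\bar M)e^{4\bar MK}\}$; note $K>1$ keeps the $1/K$ factor harmless. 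There is no real obstacle; the only subtlety is that one cannot naively apply Lemma~\ref{lemaprevio} from $t_1=t$ because that would require control of $\|u(\cdot,t)-\tilde w\|_{L^2}^2$ pointwise at $t$, which is not available—hence the averaging trick to extract a good starting time $s^*$.
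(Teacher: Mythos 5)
Your proof is correct and takes essentially the same route as the paper's: both extract a good starting time in $[t,t+K]$ by an averaging (mean-value) argument so that the $L^2$ norm there is controlled by the source-interval integral, then apply Lemma~\ref{lemaprevio}(ii) on an interval of length at most $2K$, and finally split the shifted interval $[t+\tau,t+\tau+K]$ into a piece lying inside $[t,t+K]$ and a remainder covered by the Lemma~\ref{lemaprevio}(ii) estimate. The only cosmetic difference is that you average $\|\cdot\|_{L^2}^2$ directly (yielding a $1/K$ factor), while the paper uses Cauchy--Schwarz on $\|\cdot\|_{H^1}$ (yielding a factor $K$); both give a constant depending only on $K$ and $\bar M$.
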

\begin{proof}
 Let $K>1$.
 There exists $t_0\in[0,K]$ such that
 \ben
 ||u(\cdot,t+t_0)-\tilde w||_{L^2}\le  ||u(\cdot,t+t_0)-\tilde w||_{H^1}&\le& \int_t^{t+K}\!\!\!||u(\cdot,s)-\tilde w||_{H^1}ds\\
 &\le& \sqrt{K}\Bigl(\int_t^{t+K}\!\!\!||u(\cdot,s)-\tilde w||^2_{H^1}ds\Bigr)^{1/2}.\een
 By Lemma \ref{lemaprevio}(ii), and using that $\hat h$ is decreasing we have
 \ben
\int_{t+t_0}^{t+t_0+2K}||u(\cdot,s)-\tilde w||^2_{H^1}ds\!\!\!&\le&\!\!\! C_1(\bar M)e^{4K\bar M}\Bigl(K\int_t^{t+K}||u(\cdot,s)-\tilde w||^2_{H^1}ds+ \int_{t+t_0}^{t+t_0+2K}\hat h^2(s)ds\Bigr)\\
\!\!\!&\le&\!\!\! C_1(\bar M)e^{4K\bar M}K\Bigl(\int_t^{t+K}||u(\cdot,s)-\tilde w||^2_{H^1}ds+ \int_{t}^{t+2K}\hat h^2(s)ds\Bigr).
\een
Let $\tau\in[0,K]$. As
$$ \int_{t+\tau}^{t+\tau+K}||u(\cdot,s)-\tilde w||^2_{H^1}ds\le \int_{t}^{t+K}||u(\cdot,s)-\tilde w||^2_{H^1}ds+\int_{t+t_0}^{t+t_0+2K}||u(\cdot,s)-\tilde w||^2_{H^1}ds,$$
we find that
$$ \int_{t+\tau}^{t+\tau+K}||u(\cdot,s)-\tilde w||^2_{H^1}ds\le
(C_1(\bar M)Ke^{4K\bar M}+1)\Bigl(\int_t^{t+K}||u(\cdot,s)-\tilde w||^2_{H^1}ds+ \int_{t}^{t+2K}\hat h^2(s)ds\Bigr),$$
that is,
$$\int_{0}^{K}||u(\cdot,s+\tau+t)-\tilde w||^2_{H^1}ds\le C\int_{0}^{K}||u(\cdot,s+t)-\tilde w||^2_{H^1}ds+C\int_{t}^{t+2K}\hat h^2(s)ds,$$
with $C=C_1(\bar M)Ke^{4K\bar M}+1$.
\end{proof}
\begin{proposition}\label{teo22}\mbox{ }Let $t_n\to\infty$ as $n\to\infty$ and $K>1$.
\begin{enumerate}
\item[a)]
If $||u(\cdot,t_n)-\tilde w||_{L^2}\to0$ as $n\to\infty$, then
\begin{enumerate}
\item[(i)]
$$\max_{s\in[0,K]}||u(\cdot,s+t_n)-\tilde w||_{L^2}\to 0\quad\mbox{as }n\to\infty,$$
\item[(ii)] $$\int_0^K||u(\cdot,s+t_n)-\tilde w||^2_{H^1}ds\to 0.$$
\end{enumerate}

\medskip
\item[b)] If $||u(\cdot,t_n)-\tilde w||_{L^\infty(\mathbb R^N)}\to0$ as $n\to\infty$, then
$$\lim_{n\to\infty}||u(x,t+t_n)-\tilde w||_{L^\infty(\mathbb R^N\times[0,K])}=0.$$
\end{enumerate}
\end{proposition}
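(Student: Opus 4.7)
For part (a), my plan is to derive both statements as direct consequences of Lemma~\ref{lemaprevio}. For (i), I would apply Lemma~\ref{lemaprevio}(i) with $t_1 = t_n$ and $t_2 = t_n + s$, getting
$$||u(\cdot,t_n+s)-\tilde w||_{L^2}\le e^{\bar M K}\Bigl(||u(\cdot,t_n)-\tilde w||_{L^2}+\int_{t_n}^{t_n+K}\hat h(\xi)\,d\xi\Bigr)$$
uniformly for $s\in[0,K]$. Since $\hat h$ is decreasing with $\hat h(t)\to 0$ (by \eqref{h2}), the integral is bounded by $K\hat h(t_n)\to 0$, and the first term vanishes by hypothesis. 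For (ii), applying Lemma~\ref{lemaprevio}(ii) with $t_1=t_n$ and $t_2=t_n+K$ yields
$$\int_0^K||u(\cdot,s+t_n)-\tilde w||^2_{H^1}ds\le C_1(\bar M)e^{2\bar MK}\Bigl(||u(\cdot,t_n)-\tilde w||^2_{L^2}+\int_{t_n}^{t_n+K}\hat h^2(s)\,ds\Bigr),$$
and both terms tend to $0$ by hypothesis and $(h3)$.

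For part (b), the plan is to establish an $L^\infty$ analogue of Lemma~\ref{lemaprevio}(i). Set $v_n(x,s):=u(x,s+t_n)-\tilde w(x)$. By parabolic regularity (since $u$ is bounded, $f(u)$ is bounded, and $h\in L^\infty$), the weak solution $u$ is classical, so $v_n$ satisfies pointwise
$$(v_n)_s=\Delta v_n+g_n(x,s)v_n+h(x,s+t_n),\quad g_n:=\int_0^1 f'(\tau u(\cdot,s+t_n)+(1-\tau)\tilde w)\,d\tau,$$
with $||g_n||_\infty\le\bar M$. Writing $v_n$ via Duhamel's formula and using that the heat semigroup is a contraction on $L^\infty(\mathbb R^N)$ gives
$$||v_n(\cdot,s)||_\infty\le ||v_n(\cdot,0)||_\infty+\bar M\int_0^s||v_n(\cdot,r)||_\infty dr+\int_0^s||h(\cdot,r+t_n)||_\infty dr.$$
Gr\"onwall's inequality then yields
$$\sup_{s\in[0,K]}||v_n(\cdot,s)||_\infty\le e^{\bar MK}\Bigl(||v_n(\cdot,0)||_\infty+K\sup_{r\in[0,K]}||h(\cdot,r+t_n)||_\infty\Bigr),$$
and the right side tends to $0$ by the hypothesis of (b) and by $(h1)$.

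The main technical point is justifying that Duhamel's formula applies in the $L^\infty$ framework, which reduces to upgrading the weak formulation to a classical or mild one via standard interior parabolic regularity for bounded solutions of uniformly parabolic semilinear equations with bounded right-hand side; this is a routine bootstrap and does not require any additional hypothesis. Everything else reduces to Lemma~\ref{lemaprevio} and the decay properties of $\hat h$ and $h$.
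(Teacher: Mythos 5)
Your proposal is correct and follows essentially the same route as the paper: part (a) is a direct application of Lemma~\ref{lemaprevio}(i)--(ii) together with the decay of $\hat h$, and part (b) rests on the same Gr\"onwall-type $L^\infty$ comparison estimate (with $|f(u)-f(\tilde w)|\le\bar M|u-\tilde w|$) that the paper invokes as ``standard parabolic estimates.'' The only difference is cosmetic: you spell out the Duhamel/semigroup derivation of the $L^\infty$ bound, which the paper leaves implicit.
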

\begin{proof}Let $t_n\to\infty$ as $n\to\infty$ and $K>1$.

 \noindent { Part $a)$}. By \eqref{h2}, $\hat h(t_n)\to0$. By lemma \ref{lemaprevio}$(i)$ with $t_1=t_n$ and $t_2=s\in[t_n,t_n+K]$, we have, by the monotonicity of $\hat h$,
\ben
||u(\cdot,s)-\tilde w||_{L^2}&\le& e^{\bar M(s-t_n)}\Bigl(||u(\cdot,t_n)-\tilde w||_{L^2}+\int_{t_n}^{s}\hat h(s)ds\Bigr)\\
&\le& e^{\bar MK}\Bigl(||u(\cdot,t_n)-\tilde w||_{L^2}+K\hat h(t_n)\Bigr),
\een
proving $a)(i)$. Similarly, using Lemma \ref{lemaprevio}$(ii)$, result $(ii)$ follows.
\smallskip

 \noindent { Part $b)$}: From standard parabolic estimates, it holds that
$$||u(\cdot, t+t_n)-\tilde w(x)||_{L^\infty(\mathbb R^N)}\le e^{\bar Mt}||u(\cdot, t_n)-\tilde w(x)||_{L^\infty(\mathbb R^N)}+e^{\bar Mt}\int_0^te^{-\bar Ms}||h(\cdot,t_n+s)||_{L^\infty(\mathbb R^N)}ds,$$
hence the result follows by $(h1)$.
\end{proof}

\begin{proposition}\label{teo23}
 If $y_n\to 0$ in $\RR^N$, then
\begin{enumerate}
\item[(i)]
$||\tilde w(\cdot+y_n)-\tilde w-\nabla \tilde w\cdot y_n||_{H^1}= O(|y_n|^{3/2})$, and hence
\item[(ii)] $\tilde w(\cdot+y_n)\to \tilde w$ in $H^1$ as $n\to\infty$.
\end{enumerate}
\end{proposition}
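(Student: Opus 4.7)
The plan is to prove (i) by a second-order Taylor expansion of $\tilde w$ and deduce (ii) immediately from (i) via the triangle inequality. To set things up, I first record three preparatory facts: any nontrivial nonnegative solution $\tilde w$ of \eqref{stat0} is radial about some center and both $\tilde w$ and $|\nabla\tilde w|$ decay exponentially (by the results of \cite{gnn,bl1,ps1} cited earlier in the paper); by Schauder estimates applied to $-\Delta\tilde w=f(\tilde w)$ with $f\in C^1$, one has $\tilde w\in C^{2,\alpha}_{\rm loc}$ for every $\alpha\in(0,1)$; and bootstrapping Schauder with the exponential decay of $\tilde w$ yields, for some $\delta>0$, a uniform weighted estimate $\|\tilde w\|_{C^{2,\alpha}(B_1(x))}\le C e^{-\delta|x|}$. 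In particular $\tilde w,\nabla\tilde w,\nabla^2\tilde w\in L^2(\RR^N)$.

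For (i), I split the squared $H^1$ norm into an $L^2$ part and a gradient part. The $L^2$ part is controlled by Taylor's formula with integral remainder,
\[
\tilde w(x+y)-\tilde w(x)-\nabla\tilde w(x)\cdot y=\sum_{i,j}y_iy_j\int_0^1(1-s)\,\partial_i\partial_j\tilde w(x+sy)\,ds,
\]
to which Cauchy-Schwarz, Fubini and translation invariance apply to give $\|\cdot\|_{L^2}^2\le C|y|^4\|\nabla^2\tilde w\|_{L^2}^2=O(|y|^4)$. For the gradient part, differentiating in $x$ under the translation yields
\[
\nabla\tilde w(x+y)-\nabla\tilde w(x)-\nabla^2\tilde w(x)\cdot y=\int_0^1\bigl[\nabla^2\tilde w(x+sy)-\nabla^2\tilde w(x)\bigr]\cdot y\,ds,
\]
and combining the $C^\alpha$ modulus of continuity of $\nabla^2\tilde w$ with the weighted bound above (with $\alpha=1/2$) gives the pointwise estimate, valid for $|y|\le 1$,
\[
|\nabla\tilde w(x+y)-\nabla\tilde w(x)-\nabla^2\tilde w(x)\cdot y|\le C|y|^{3/2}e^{-\delta|x|},
\]
whose $L^2$ norm is $O(|y|^{3/2})$. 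Adding the two contributions yields $\|\tilde w(\cdot+y)-\tilde w-\nabla\tilde w\cdot y\|_{H^1}=O(|y|^{3/2})$, which is (i).

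Part (ii) follows in one line from (i) and the triangle inequality,
\[
\|\tilde w(\cdot+y_n)-\tilde w\|_{H^1}\le\|\tilde w(\cdot+y_n)-\tilde w-\nabla\tilde w\cdot y_n\|_{H^1}+|y_n|\,\|\nabla\tilde w\|_{H^1}=O(|y_n|^{3/2})+O(|y_n|)\to 0.
\]
The main obstacle, and the reason the stated exponent is $3/2$ rather than $2$, is the uniform weighted $C^\alpha$ bound on $\nabla^2\tilde w$: one needs not only $\nabla^2\tilde w\in L^2$ but a $C^\alpha$ modulus with an $L^2$-integrable weight. With $f\in C^1$, Schauder theory delivers this for any $\alpha\in(0,1)$, and $\alpha=1/2$ is enough. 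A $C^2$ hypothesis on $f$ would instead give $\tilde w\in H^3$ and upgrade the exponent to $2$ via a $\nabla^3\tilde w$ remainder, but $3/2$ is all that is needed in the sequel.
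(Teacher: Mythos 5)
Your proposal is correct, but it takes a genuinely different route from the paper's proof, trading elementary energy identities for classical regularity theory. The paper never invokes Schauder estimates or pointwise $C^{2,\alpha}$ decay. Instead, after noting that differentiating $\Delta\tilde w+f(\tilde w)=0$ and testing against $\tilde w_{x_i}$ gives $\|\nabla\tilde w_{x_i}\|_{L^2}^2\le\bar M\|\tilde w\|_{H^1}^2$ (so only $\tilde w\in H^2$ is needed), it writes $\tilde w_{x_i}(x+y_n)-\tilde w_{x_i}(x)-\nabla\tilde w_{x_i}(x)\cdot y_n$ via the fundamental theorem of calculus, expands the square, uses translation invariance, and then integrates by parts \emph{using the linearized equation} $\Delta\tilde w_{x_i}=-f'(\tilde w)\tilde w_{x_i}$ to convert the awkward cross-term $\int\nabla\tilde w_{x_i}(x+sy_n)\cdot\nabla\tilde w_{x_i}(x)\,dx$ into a term involving only $\tilde w_{x_i}$ itself. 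A second application of the fundamental theorem of calculus and Cauchy--Schwarz then yields $O(|y_n|^3)$ for the squared $H^1$ norm with no Hölder continuity of $\nabla^2\tilde w$ anywhere in sight; the exponent $3/2$ arises from one ``free'' factor of $|y_n|$ gained per FTC iteration, not from a $C^{1/2}$ modulus. Your approach buys a sharper $O(|y|^2)$ bound on the $L^2$ part and makes transparent how a $C^2$ hypothesis on $f$ would upgrade the exponent to $2$, but it needs more input: the uniform weighted bound $\|\tilde w\|_{C^{2,\alpha}(B_1(x))}\le Ce^{-\delta|x|}$ is plausible but requires a careful bootstrap (exponential decay of $\nabla\tilde w$, a weighted local Schauder estimate, and an interpolation step to control the $C^{0,\alpha}$ seminorm of $f(\tilde w)$ on $B_2(x)$), none of which you spell out. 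If you want your route to be watertight you should at least state the weighted Schauder estimate you are using and indicate why $[\tilde w]_{C^{0,\alpha}(B_2(x))}$ inherits the exponential decay; the paper sidesteps all of this by staying at the $H^2$--energy level.
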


\begin{proof}
Differentiating equation \eqref{stat0} with respect to $x_i$, then multiplying by $\tilde w_{x_i}$  and integrating, we obtain that
$$\inte |\nabla \tilde w_{x_i}|^2dx=\Bigm|\inte f'(\tilde w)\tilde w_{x_i}^2ds\Bigm|\le \bar M||\tilde w||_{H^1}^2.$$
On the other hand, as
\ben
\tilde w_{x_i}(x+y_n)-\tilde w_{x_i}(x)-\nabla \tilde w_{x_i}(x)\cdot y_n=\int_0^1(\nabla \tilde w_{x_i}(x+sy_n)-\nabla \tilde w_{x_i}(x))\cdot y_n ds,
\een
we find that
$$\inte |\tilde w_{x_i}(x+y_n)-\tilde w_{x_i}(x)-\nabla \tilde w_{x_i}(x)\cdot y_n|^2dx\le |y_n|^2\inte\int_0^1|\nabla \tilde w_{x_i}(x+sy_n)-\nabla \tilde w_{x_i}(x)|^2ds\ dx$$
\be\label{cota2}
&=& |y_n|^2\int_0^1\inte \left(|\nabla \tilde w_{x_i}(x+sy_n)|^2+|\nabla \tilde w_{x_i}(x)|^2-2\nabla \tilde w_{x_i}(x+sy_n) \cdot \nabla \tilde w_{x_i}(x)\right)dx\ ds\nonumber\\
&=&2|y_n|^2\int_0^1\inte \left(|\nabla \tilde w_{x_i}(x)|^2-\nabla \tilde w_{x_i}(x+sy_n) \cdot \nabla \tilde w_{x_i}(x)\right)dx\ ds\nonumber\\
&=&-2|y_n|^2\int_0^1\inte f'(\tilde w)\tilde w_{x_i}(x)(\tilde w_{x_i}(x+sy_n)- \tilde w_{x_i}(x)) dx\ ds\nonumber\\
&=&-2|y_n|^2\inte f'(\tilde w)\tilde w_{x_i}(x)\int_0^1\Bigl(\int_0^s\nabla \tilde w_{x_i}(x+\tau y_n)\cdot y_n d\tau\Bigr)\ ds\ dx.
\ee
Finally, since
\ben
\inte \Bigl(\int_0^1 \Bigl(\int_0^s\nabla \tilde w_{x_i}(x+\tau y_n)\cdot y_n d\tau \Bigr)\ ds \Bigr)^2\ dx&\le& |y_n|^2\inte \Bigl(\int_0^1\int_0^s|\nabla \tilde w_{x_i}(x+\tau y_n)|^2d\tau\ ds \Bigr)dx\\
&\le& |y_n|^2\inte |\nabla \tilde w_{x_i}(x)|^2dx
\een
we obtain from  \eqref{cota2} that
\ben
\inte |\tilde w_{x_i}(x+y_n)-\tilde w_{x_i}(x)-\nabla \tilde w_{x_i}(x)\cdot y_n|^2dx
\le  2\bar M^2|y_n|^3||\tilde w||_{H^1}^2.
\een
Similarly, arguing with $\tilde w$ instead of $\tilde w_{x_i}$, and noting that by $(f1)$,  $||f(\tilde w)||_{L^2}\le \bar M ||\tilde w||_{L^2}$, we obtain
\ben
\inte |\tilde w(x+y_n)-\tilde w(x)-\nabla \tilde w(x)\cdot y_n|^2dx
\le  2\bar M^2|y_n|^3||\tilde w||_{H^1}^2
\een
proving the proposition.
\end{proof}

\end{document}